\newtheorem{theorem}{Theorem}[section]
\newtheorem{lemma}[theorem]{Lemma}
\newtheorem{corollary}[theorem]{Corollary}
\newtheorem{proposition}{Proposition}[section]
\newcommand{\R}{\mathbb{R}}
\newcommand{\Q}{\mathbb{Q}}
\newcommand{\N}{\mathbb{N}}
\newcommand{\B}{\mathcal{B}}
\newcommand{\Sp}{\mathcal{S}}
\newcommand{\G}[1][2]{G^1_{\R^#1}}
\newcommand{\del}{\partial}
\DeclareMathOperator{\Conv}{Conv}
\DeclareMathOperator{\Cone}{Cone}
\DeclareMathOperator{\diam}{diam}
\DeclarePairedDelimiter{\ceil}{\lceil}{\rceil}
\begin{document}
\title{\bf Connectedness of Unit Distance Subgraphs Induced by Closed Convex Sets}
\author{Remie Janssen\footnote{Delft Institute of Applied Mathematics, Delft University of Technology,
Delft, The Netherlands. Research funded by the Netherlands Organization for Scientific Research (NWO), Vidi grant 639.072.602 of dr. Leo van Iersel.} \\ \href{mailto:remiejanssen@gmail.com}{remiejanssen@gmail.com}
\and Leonie van Steijn \footnote{Mathematical Institute, Leiden University,  
Leiden, The Netherlands.}\\ \href{mailto:l.van.steijn@math.leidenuniv.nl}{l.van.steijn@math.leidenuniv.nl}}
\maketitle

\begin{abstract}
    The unit distance graph $\G[d]$ is the infinite graph whose nodes are points in $\R^d$, with an edge between two points if the Euclidean distance between these points is $1$. The 2-dimensional version $\G$ of this graph is typically studied for its chromatic number, as in the Hadwiger-Nelson problem. However, other properties of unit distance graphs are rarely studied. Here, we consider the restriction of $\G[d]$ to closed convex subsets $X$ of $\R^d$. We show that the graph $\G[d][X]$ is connected precisely when the radius of $r(X)$ of $X$ is equal to $0$, or when $r(X)\geq 1$ and the affine dimension of $X$ is at least $2$. For hyperrectangles, we give bounds for the graph diameter in the critical case that the radius is exactly 1.
\end{abstract}

\thispagestyle{empty}

\section{Introduction}
In the Hadwiger-Nelson problem, the aim is to colour the plane with as few colours as possible, so that no pair of points at distance 1 from each other have the same colour. Equivalently, it asks to find the chromatic number of $\G[2]$, the unit distance graph of the plane which has node set $\R^2$ and an edge between two points $u,v\in\R^2$ precisely when they are at distance 1 from each other. This problem is typically tackled by studying finite subgraphs of $\G[2]$. 
However, $\G[2]$ has an interesting structure when restricted to (infinite) connected subsets of the plane as well, such as a strip of the plane $S_r=\R\times [0,r]$. It turns out that, in the induced subgraph $\G[2][S_r]$ of $\G[2]$, the existence of cycles of a given length and the chromatic number both depend on the size $r$ of the strip \cite{bauslaugh1998tearing,kruskal2008chromatic,axenovich2014chromatic}. A generalisation of these results can allegedly be found in \cite{kanel2018chromatic} (which is written in Russian, a language neither of the authors can read).

In this paper, we study subgraphs of $\G[d]$ (the unit distance graph of $\R^d$) induced by closed and convex subsets of $\R^d$. However, instead of studying the chromatic number of these graphs, we investigate their connectedness and graph diameters. In other words, we aim to characterize when there is a path between any two points in a closed convex set using unit distance steps. Additionally, we study bounds for the maximal number of unit steps needed to go from any point to any other point in a hyperrectangle.

\section{Preliminaries}

\subsection{Subsets of $\R^d$}
First, we define several basic subsets of $\R^d$ that we will use throughout this paper. We will use $d(x,y)$ to denote the Euclidean distance, and $x\cdot y$ for the standard inner product between $x,y\in\R^d$.

A ball is defined by its center and its radius: Let $d\in\N$, $v\in\R^d$, and $r\in\R$, then $\B^d(v,r)=\{x\in\R^d: d(x,v)\leq r\}$ is the $d$-dimensional ball of radius $r$ around $v$. The sphere $\Sp^{d-1}(v,r):=\{x\in\R^d: d(x,v)= r\}$ is the boundary $\del \B^d(v,r)$ of the ball $\B^d(v,r)$.

The $d$-dimensional hypercube with side lengths $l$ is denoted $C^d(l)$. We always use the embedding $\{x\in\R^d: x_i\in [0,l]\}$ of $C^d(l)$ in $\R^d$.
Similarly, a $d$-dimensional hyperrectangle is defined by a $d$-tuple of non-negative real numbers $l=(l_1,\ldots, l_d)\in\R^d$. We denote this hyperrectangle by $R^d(l)=\{x\in\R^d: x_i\in[0,l_i]\}$.

Let $P,Q\in\R^d$, then $PQ$ denotes the line segment between $P$ and $Q$. Moreover, we write $H(P,Q)=\{x\in\R^d:(Q-P)\cdot(x-P)\leq 0\}$, which is the closed half-space bounded by the hyperplane through $P$ perpendicular to $PQ$ that does not include $Q$. 

All of these subsets of $\R^d$, except for the sphere, are convex. In other words, for any pair of points in such a set (i.e., a ball, a hypercube, a hyperrectangle, a line segment, a hyperplane, or a halfspace) the line segment between them is also contained in the set. 

To construct convex sets, we use the convex hull $\Conv(A)$ of a set $A\subset\R^d$, which is the smallest convex set containing $A$. The cone $\Cone(A)$ of a set of points $A$ is the set $\{\lambda x: x\in\Conv(A), \lambda\geq0\}$. For a finite set of points $A=(a_1,\ldots,a_n)$ in $\R^d$ the convex hull is $\{\lambda_1a_1+\cdots+\lambda_na_n:\sum_{i=1}^n\lambda_i=1\}$. If all $a_i$ are affinely independent, then $S=\Conv(A)$ is an $n$-simplex. For each subset $I\subseteq [n]$, the set $\Conv(\{a_i\}_{i\in I})$ is an $|I|$-dimensional face of $\Conv(A)$. 

Finally, to construct additional subsets of $R^d$, we use the Minkowski sum and scaling of a set. We denote the Minkowski sum by $\oplus$, i.e., $X\oplus Y := \{x+y: x\in X, y\in Y\}$. Scaling a set $X\subseteq \R^d$ by a factor $\lambda\in\R_{\geq 0}$ is defined as $\lambda X := \{\lambda x\in \R^d: x\in X\}$. Note that the Minkowski sum of two convex sets is convex, and $\lambda X$ is convex iff $X$ is convex.

\subsection{Enclosing balls}
Let $X\subseteq \R^d$, then any ball $\B^d(v,r)\supseteq X$ containing $X$ is an enclosing ball of $X$. For each bounded set $X\subset\R^d$, there is a unique minimal enclosing ball (m.e.b.), which is the enclosing ball of $X$ with minimal radius.
The radius $r(X)$ of $X$ is defined as the radius of the m.e.b.~of $X$.

Note that the center of the m.e.b.~of a convex set lies in the convex set. A related lemma is the following, which, according to \cite{fischer2003fast}, is well-known and can be traced back to Seidel (no reference given). 

\begin{lemma}[Seidel]\label{lem:Seidel}
Let $X$ be a set of points on the boundary of some ball $B$ with center $C$. Then $B$ is the m.e.b.~of $X$ if and only if $C\in \Conv(X)$.
\end{lemma}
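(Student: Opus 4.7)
The plan is to prove both directions separately. For each direction I would set up a short contradiction/inequality argument based on the basic identity that $x\in X$ lies at exactly distance $r$ from $C$.

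For the ``only if'' direction, I would argue by contrapositive: assume $C\notin\Conv(X)$ and produce a strictly smaller enclosing ball. Because $\Conv(X)$ is a (closed) convex set not containing $C$, the separating hyperplane theorem yields a unit vector $u$ and a constant $\varepsilon>0$ with $u\cdot(x-C)\ge\varepsilon$ for every $x\in X$. Shifting the center to $C'=C+\delta u$ for small $\delta>0$ and expanding
\[
\|x-C'\|^2 \;=\; \|x-C\|^2 - 2\delta\, u\cdot(x-C) + \delta^2 \;\le\; r^2 - 2\delta\varepsilon + \delta^2,
\]
which is strictly less than $r^2$ for any $\delta\in(0,2\varepsilon)$. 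Thus a smaller ball still encloses $X$, contradicting minimality of $B$. (Here I would briefly note that one may assume $\Conv(X)$ is closed, e.g.\ by passing to $\overline{\Conv(X)}$ or using that $X\subseteq\partial B$ is bounded, so the separating hyperplane step is legitimate.)

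For the ``if'' direction, I would assume $C\in\Conv(X)$ and show that any enclosing ball $\B^d(C',r')\supseteq X$ satisfies $r'\ge r$. Write $C=\sum_i\lambda_i x_i$ with $x_i\in X$, $\lambda_i\ge 0$, $\sum_i\lambda_i=1$, and set $u=C'-C$. For each $x_i$, the enclosing inequality $\|x_i-C'\|^2\le r'^2$ expands as
\[
r^2 \;-\; 2(x_i-C)\cdot u \;+\; \|u\|^2 \;\le\; r'^2,
\qquad\text{i.e.}\qquad
2(x_i-C)\cdot u \;\ge\; r^2 - r'^2 + \|u\|^2 .
\]
Taking the convex combination with weights $\lambda_i$ kills the left-hand side since $\sum_i\lambda_i(x_i-C)=0$, leaving $0\ge r^2-r'^2+\|u\|^2$, so $r'^2\ge r^2+\|u\|^2\ge r^2$. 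Hence $B$ achieves the minimum radius, and by the uniqueness of the m.e.b.\ mentioned just above the lemma, $B$ is the m.e.b.\ of $X$.

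I expect the conceptually hard part to be essentially trivial once set up; the only mild obstacle is the topological subtlety in the forward direction (ensuring the separation works without additional hypotheses on $X$), which I would handle by replacing $\Conv(X)$ with its closure, noting that the subsequent perturbation argument is unaffected. The reverse direction is a one-line computation once the convex combination of the expanded quadratic inequalities is written down.
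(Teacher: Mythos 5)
The paper does not actually prove this lemma: it is quoted as a well-known fact attributed to Seidel (via the reference of Fischer, G\"artner and Kutz), so there is no in-paper argument to compare yours against. Your proof is the standard one and is essentially correct. The ``if'' direction is airtight for arbitrary $X$: writing $C$ as a finite convex combination of points of $X$ and averaging the expanded inequalities $\|x_i-C'\|^2\le r'^2$ gives $r'^2\ge r^2+\|C'-C\|^2$, which in fact also yields uniqueness of the minimizer as a bonus. The ``only if'' direction via strict separation and perturbation of the center is likewise the usual argument, and the computation is right.

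The one point worth pressing on is the topological caveat you wave at in your parenthetical. Strict separation requires $C$ to have positive distance from $\Conv(X)$, i.e.\ $C\notin\overline{\Conv(X)}$, and passing to the closure changes the \emph{statement}, not just the proof: if $C\in\overline{\Conv(X)}\setminus\Conv(X)$ your contrapositive argument produces no smaller enclosing ball, and indeed none exists. Concretely, take $X=\{(\cos\theta,\sin\theta):0<\theta<\pi\}$ on the unit circle in $\R^2$: the unit disk is the m.e.b.\ of $X$ (its closure contains the antipodal pair $(\pm1,0)$), yet every point of $\Conv(X)$ has strictly positive second coordinate, so the center $(0,0)$ is not in $\Conv(X)$. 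Thus the lemma as literally stated fails for such $X$, and your proof correctly establishes it exactly when $\Conv(X)$ is closed --- in particular for finite or compact $X$, which covers every application made in the paper (vertex sets of simplices, finite subsets produced by Carath\'eodory's theorem, and the compact set $\del B\cap X$). You should state that hypothesis explicitly rather than suggesting the closure can be substituted without loss.
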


For a simplex $S\subseteq \R^d$ the center of the m.e.b of $S$ is always contained in $S$. This center may lie on a face of $S$. Here, we are interested in so-called well-centered simplices, which are simplices where this is not the case. In other words, a simplex $S$ is well-centered if the center of its m.e.b.~is contained in its relative interior \cite{vanderzee2010well,vanderzee2013geometric}. The relative interior of $S$ is the interior of $S$ when viewed as a subset of the affine span of $S$. 

\subsection{Graphs}
We now define the main subject of this paper, the unit distance graph of a subset of $\R^d$. The unit distance graph in dimension $d$ is denoted $\G[d]$; it is the infinite graph whose vertex set is $\R^d$, and two points $x,y\in\R^d$ are connected by an edge if $d(x,y)=1$.

We restrict this graph to subsets of $\R^d$ by considering induced subgraphs. Let $G=(V,E)$ be a graph, and $V'\subseteq V$ a subset of the vertices, then $G[V']$ is the subgraph of $G$ induced by $V'$: the graph with vertices $V'$ and an edge between $u,v\in V'$ if $\{u,v\}\in E$.

The graphs we are interested in are $\G[d][X]$, where $X\subset \R^d$ is compact and convex. In particular, we will study the connectedness and the diameter of these graphs, which are invariant under translation and rotation of $X$. The diameter $\diam(G)$ of a graph $G$ is defined as the maximal graph distance between any pair of vertices of $G$. The graph distance between two nodes $x$ and $y$ of $G$ is the minimum number of edges in a path between $x$ and $y$ through $G$. To prevent confusion between the Euclidean distance and the graph distance, we will henceforth reserve the term distance for the Euclidean distance, and the term diameter for the graph diameter. The graph distance will always be given in a number of steps. If there is a sequence of (unit distance) steps between two points $u$ and $v$, then we say that $v$ can be reached from $u$.

\section{Triangles and $n$-Simplices}

\subsection{Wiggling Through Triangles}

The following lemmas shows that each sufficiently large triangle has enough `wiggle room' to reach a significant part of one of the sides of the triangle using steps of length exactly one. This will be instrumental when proving that $\G[d][S]$ is connected for an $n$-simplex $S$.

\begin{lemma}\label{lem:RectangleWiggle}
Let $R=[0,1+x]\times[0,h]\subseteq \R^2$ for some $h>0$ and $0\leq x\leq 1$. Then, for each pair of nodes $u,v\in\G[2][R]$ that lie on the line segment $[0,x]\times\{0\}$, there is a path between $u$ and $v$ in $\G[2][R]$. 

Moreover, this path consists of at most 2 steps if $h\geq 1$, and at most $4 \ceil*{\frac{x}{2(1-\sqrt{1-h^2})}}$ steps if $h<1$.
\end{lemma}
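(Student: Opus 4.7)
The plan is to split along the size of $h$.

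If $h \geq 1$, the plan is to construct a 2-step path by lifting the midpoint of $uv$. Set $m := \tfrac{1}{2}(u + v)$ and $w := m + (0, \sqrt{1 - |u - v|^2/4})$; the point $w$ has $x$-coordinate $m_1 \in [0, x] \subseteq [0, 1 + x]$ and height in $[\tfrac{\sqrt{3}}{2}, 1] \subseteq [0, h]$, using $|u - v| \leq x \leq 1 \leq h$. Hence $w \in R$, and a Pythagorean computation yields $d(u, w) = d(w, v) = 1$, so $u \to w \to v$ is the required path.

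If $h < 1$, let $s := \sqrt{1 - h^2} \in [0, 1)$ and, without loss of generality, assume $u_1 \leq v_1$. Let $\delta := v_1 - u_1 \in [0, x]$, $k := \ceil*{\delta / (2(1-s))}$, and $\delta_0 := \delta / k \leq 2(1-s)$ (the case $u = v$ is trivial with $k = 0$). The plan is to concatenate $k$ identical \emph{micro-moves}, each consisting of 4 unit steps and advancing horizontally by exactly $\delta_0$. With $s' := 1 - \delta_0/2 \in [s, 1]$ and $h' := \sqrt{1 - (s')^2} \in [0, h]$, the proposed micro-move starting at $(a, 0)$ is
\[
(a, 0) \to (a + 1, 0) \to (a + 1 - s', h') \to (a + 2 - s', h') \to (a + 2 - 2s', 0),
\]
whose step vectors $(1, 0)$, $(-s', h')$, $(1, 0)$, $(-s', -h')$ each have unit length and sum to $(2 - 2s', 0) = (\delta_0, 0)$. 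Chaining $k$ such micro-moves produces a path of $4k \leq 4 \ceil*{x / (2(1-s))}$ steps from $u$ to $v$.

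The main obstacle is verifying that every intermediate vertex lies in $R$, since the micro-moves briefly venture into the \emph{spare} column $[x, 1 + x] \times [0, h]$. The height $h'$ lies in $[0, h]$ because $s' \geq s$. For the $x$-coordinates, the tightest constraints come from $(a + 1, 0)$ and $(a + 2 - s', h')$, requiring $a \leq x$ and $a \leq x - \delta_0 / 2$ respectively. Each micro-move starts at some $a \in \{u_1, u_1 + \delta_0, \ldots, v_1 - \delta_0\}$, so $a \leq v_1 - \delta_0 \leq x - \delta_0 \leq x - \delta_0 / 2$; both bounds are therefore met throughout the construction.
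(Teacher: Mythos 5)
Your proposal is correct and uses essentially the same construction as the paper: the identical four-step ``wiggle'' move (right by $1$, up-and-left to height $\sqrt{1-(1-\delta_0/2)^2}$, right by $1$, down-and-left), iterated along the segment, with the same step count. The only cosmetic differences are that you subdivide the horizontal displacement into equal increments rather than greedily repeating a maximal move, and that you make the two-step path for $h\geq 1$ explicit via the lifted midpoint, where the paper simply asserts it.
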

\begin{proof}
To prove the lemma, we start in a point $P\in[0,x]\times\{0\}$ and show that we can reach all points on a small line segment to the right of $P$ in four steps. Obviously, if $h\geq 1$, each point on $[0,x+1]\times\{0\}$ can be reached in at most two steps from $P$, so we assume $h<1$. 

\begin{figure}[t]
    \centering
    \includegraphics[width=.6\textwidth]{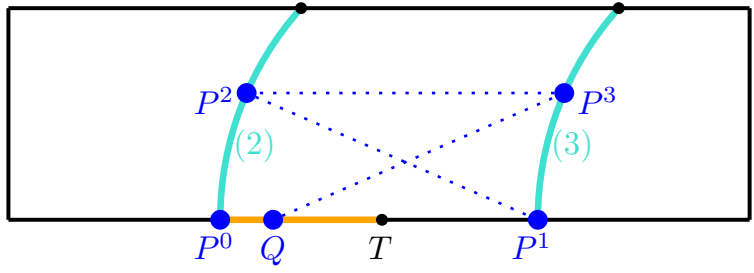}
    \caption{Wiggling through a rectangle from $P^0=(p,0)$ to $Q=(q,0)$ in Lemma~\ref{lem:RectangleWiggle}. The point $Q$ must lie on the line segment $P^0T$ (orange), where $T=(p+2(1-\sqrt{1-h^2}),0)$. Each point on $P^0T$ can be reached from some point on the circle arc (3), which is simply the circle arc (2) translated to the right by distance 1. Moreover, (2) is the circle arc with points at distance one from the point $P^1=(p+1,0)$. Hence, we can reach $Q$ from $P^0$ via $P^1$, some point $P^2$ on (2), and a point $P^3$ on (3). All dotted lines have unit length.}
    \label{fig:WiggleRectangle}
\end{figure}

Let $P=P^0=(p,0)$ with $0\leq p\leq x$, and let $Q=(p+q,0)$ such that $p+q/2\leq x$ and $0\leq q\leq 2(1-\sqrt{1-h^2})\leq 2$. Then, by taking the following four unit steps, we can reach the point $Q$ from $P^0$ (Figure~\ref{fig:WiggleRectangle}). First, go a unit step to the right to $P^1=(p+1,0)$. Next, go back to the left and up, to the point \[P^2=\left(p+q/2,\sqrt{1-(1-q/2)^2}\right).\] Note that $P^2\in \Sp^2(P^1,1)\cap R$. Indeed, $P^2\in \Sp^2(P^1,1)$ because $d(P^1,P^2)=1$, and $P^2\in R$ because $0\leq p+q/2\leq x$ and \[\sqrt{1-(1-q/2)^2}\leq \sqrt{1-(1-(1-\sqrt{1-h^2}))^2}= h.\] Take a third unit step to the right, to $P^3=P^2+(1,0)$, which lies in $R$ because $p+q/2+1\leq x+1$. Lastly, take a unit step from $P^3$ to $Q$, where the fact that $d(P^3,Q)=1$ follows from a simple calculation. 

Now let $u=(u_0,0)$ and $v=(v_0,0)$ be two points on the line segment $[0,x]\times\{0\}$ such that $u_0\leq v_0$. By repeatedly using the sequence of four steps given above, we can reach $v$ from $u$ in $\G[2][R]$. This takes at most \[4\ceil*{\frac{v_0-u_0}{2\left(1-\sqrt{1-h^2}\right)}}\] 
steps if $h\leq 1$ and at most $2$ steps otherwise. Finally, note that we can therefore upper bound the number of steps between $u$ and $v$ by 2 if $h\geq 1$ and, otherwise, by
\[4\ceil*{\frac{x}{2\left(1-\sqrt{1-h^2}\right)}}.\]
\end{proof}

\begin{lemma}\label{lem:TriangleWiggle}
Let $T\in \R^d$ be the triangle $\Conv(\{P^0,P^1,P^2\})$. Assume that $d(P^0,P^1)=1+x>1$ and that there is a point $A$ in the relative interior of $T$ such that $d(P^0,A)=d(P^1,A)=1$. Then for each pair of points $u,v\in P^0P^1\cap \B^d(P^0,x)$, there is a path from $u$ to $v$ in $\G[d][T]$.
\end{lemma}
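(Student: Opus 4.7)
I plan to reduce the statement to Lemma~\ref{lem:RectangleWiggle} by embedding a family of rectangles inside $T$. First, work in the 2-dimensional affine span of $T$ and place coordinates with $P^0=(0,0)$ and $P^1=(1+x,0)$. The two constraints $d(P^0,A)=d(P^1,A)=1$ then pin $A$ to the perpendicular bisector of $P^0P^1$ at height $h=\sqrt{1-((1+x)/2)^2}$; requiring $A$ to lie in the relative interior of $T$ rules out the degenerate case $x=1$ (which would put $A$ on $P^0P^1$), so $0<x<1$ and $h>0$. By convexity, the isoceles subtriangle $T':=\Conv\{P^0,P^1,A\}$ is contained in $T$, and all constructions below will live inside $T'$.

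The key construction is, for each parameter $0<x'<x$, the axis-aligned rectangle
\[
R_{x'} \;:=\; \Bigl[\tfrac{x-x'}{2},\ \tfrac{x-x'}{2}+1+x'\Bigr] \times \Bigl[0,\ \tfrac{h(x-x')}{1+x}\Bigr],
\]
whose top-left and top-right corners, by a direct computation, land exactly on the slanted sides $P^0A$ and $P^1A$ respectively; hence $R_{x'}\subseteq T'\subseteq T$. After a horizontal translation placing $R_{x'}$ in the standard form $[0,1+x']\times[0,h']$, Lemma~\ref{lem:RectangleWiggle} supplies a path between any two points of its length-$x'$ base subsegment, and this path lies in $\G[d][R_{x'}]\subseteq\G[d][T]$. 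Pulling back the translation, the covered subsegment of $P^0P^1$ is $\bigl[(x-x')/2,\ (x+x')/2\bigr]\times\{0\}$. Letting $x'$ range over $(0,x)$, the union of these subsegments is the open interval $(0,x)\times\{0\}$, so any two points $u,v$ with $u_0,v_0\in(0,x)$ are connected in $\G[d][T]$.

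What remains, and what I expect to be the main obstacle, is bridging to the two endpoints $P^0=(0,0)$ and $(x,0)$ of the target segment, because the family $R_{x'}$ degenerates to a segment as $x'\to x$ and never includes these corners. The three unit distances $d(P^0,A)=1$, $d(A,P^1)=1$, and $d(P^1,(x,0))=1$ immediately yield the path $P^0\to A\to P^1\to (x,0)$ linking the two corners to each other, but not yet to the interior-wiggle range. Closing this final gap should exploit the hypothesis that $A$ lies in the \emph{relative interior} of $T$ rather than merely on its boundary: a small two-dimensional neighbourhood of $A$ inside $T$ provides extra head room beyond $T'$, which can be used either to perturb the rectangle $R_{x'}$ slightly so that its base subsegment reaches into an arbitrarily small neighbourhood of each corner, or to build a short auxiliary path from the corner chain through $A$ and $P^1$ into the wigglable interior.
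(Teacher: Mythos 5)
Your rectangle family $R_{x'}$ is a nice, correct way to handle the interior of the target segment: the corner computations check out, each $R_{x'}$ sits inside $\Conv\{P^0,P^1,A\}$ by convexity, and taking $x'$ close enough to $x$ connects any two points of $(0,x)\times\{0\}$ via Lemma~\ref{lem:RectangleWiggle}. But the part you flag as "remaining" is a genuine gap, and it is exactly the part that carries the weight of the relative-interior hypothesis. Knowing that $P^0$ and $(x,0)$ are joined to \emph{each other} by the chain $P^0\to A\to P^1\to(x,0)$ does not join either of them to the wigglable range $(0,x)$, and your first proposed repair cannot work: within $T'$ (and within $T$ whenever $P^2_1\geq 0$) the only point with first coordinate $0$ is the vertex $P^0$ itself, so no rectangle of positive height has $0$ in the projection of its base, and "reaching an arbitrarily small neighbourhood of the corner" still leaves the corner in its own component as far as your argument shows.

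Your second suggestion is the right one, and it is what the paper actually does, but it needs to be carried out rather than gestured at. Because $A$ lies in the relative interior of $T$, a short arc $C=\Sp^1(P^1,1)\cap\B^2(A,\epsilon)$ around $A$ lies entirely in $T$. For a point $Q=(\delta,0)$ with $\delta\geq 0$ small, one has $d(Q,A)\leq 1$ (since $A$ is on the unit circle about $P^0$ and the arc bends toward $Q$ when $P^2_1\geq0$) while the far endpoint $c$ of $C$ satisfies $d(Q,c)\geq 1$ by the triangle inequality (as $d(P^0,c)>1$); the intermediate value theorem along $C$ then yields a point of $C$ at distance exactly $1$ from $Q$. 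This gives the path $P^0\to A\to P^1\to c'\to(\delta,0)$ into your interior range, and symmetrically a path into a neighbourhood of $P^1$, which after a unit translation covers $(x-r',x]\times\{0\}$. Without this step (or an equivalent one) the lemma is not proved for $u$ or $v$ equal to $P^0$ or to the endpoint of $P^0P^1\cap\B^d(P^0,x)$ at distance $x$ from $P^0$.
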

\begin{proof}
Without loss of generality, we assume $d=2$, $P^0=(0,0)$, $P^1=(1+x,0)$, and $P^2=(P^2_1,P^2_2)$ with $P^2_2>0$. Finally, to simplify notation in the proof, we assume $0\leq P^2_1\leq 1+x$. Note that this does not affect the conclusion: if $P^2_1$ lies outside this region, we can restrict to the triangle $T' = T\cap [0,1+x]\times\R$. Indeed the result for $T$ then follows because the assumptions of the lemma still hold for $T'$, and $\G[d][T']$ is a subgraph of $\G[d][T]$ containing all $u,v\in P^0P^1\cap \B^d(P^0,x)$.

We first prove that we can reach some small neighbourhoods of $P^0$ and $P^1$ in $T$ using unit distance steps (Figure~\ref{fig:WiggleTriangle}). Then, we extend the small region around $P^0$ to the right by `wiggling'.  Finally, translating the neighbourhood of $P^1$ to the left by a distance of one, we reach the desired part of the line segment $P^0P^1$.

\begin{figure}[ht!]
    \centering
    \includegraphics[width=.9\textwidth]{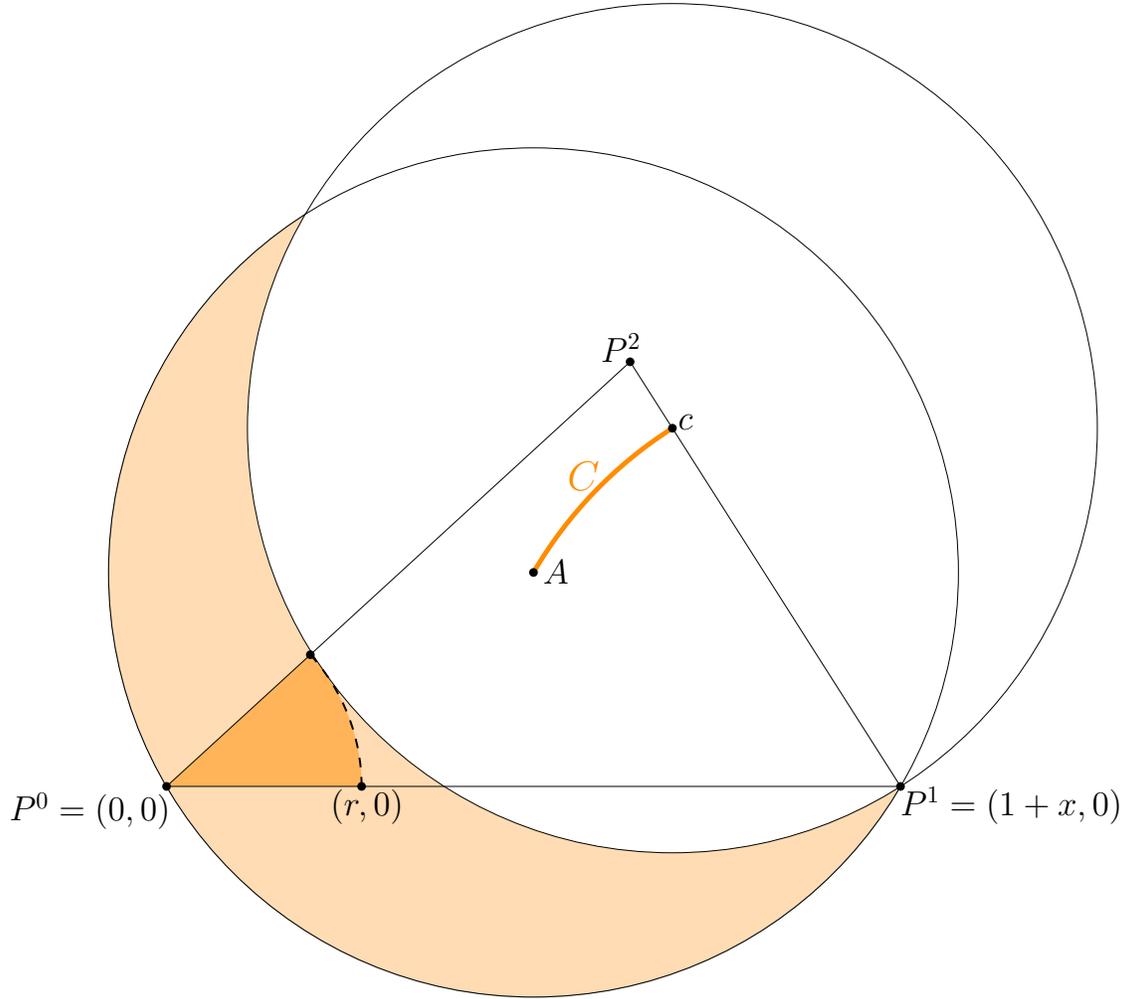}
    \caption{Reaching the corners of a triangle as in Lemma~\ref{lem:TriangleWiggle}. To reach  any point $Q$ in the shaded are within $T$ from $A$, take one step to $P^1$, then one to a point on the arc $C$, and finally to $Q$. There exists a suitable point on $C$ because the shaded area lies in $\B^2(A,1)\setminus \B^2(c,1)$ and $C$ is a continuous path from $c$ to $A$.}
    \label{fig:WiggleTriangle}
\end{figure}
 
Note that there is a small arc of the circle $\Sp^1(P^1,1)$ containing $A$ that lies within $T$, namely $C=\Sp^1(P^1,1)\cap \B^2(A,\epsilon)$ for some $\epsilon>0$ sufficiently small. Note that the endpoint $c$ of $C$ to the right of $A$ has distance $d(c,P^0)=1+r>1$ to $P^0$, so $d(Q,c)\geq 1$ for all $Q\in \B^2(P^0,r)\cap T$ by the triangle inequality. Moreover, because $P^2_1\geq 0$, we have $d(Q,A)\leq 1$ for all $Q\in \B^2(P^0,r)\cap T$. These two distance bounds, together with the continuity of the circle arc $C$ from $c$ to $A$ and the intermediate value theorem imply that each $Q\in \B^2(P_0,r)\cap T$ is at distance 1 from some point on $C$. By symmetry in $P^0$ and $P^1$, there is some $0<r'\in \R$ such that each $Q'\in \B^2(P^1,r')\cap T$ can be reached from $A$ as well.

Note that we now know we can reach $[0,r]\times\{0\}$ and $[1+x-r',1+x]\times\{0\}$ from $A$. To see that we can also reach $[x-r',x]\times\{0\}$, we note that this is just the line segment $[1+x-r',1+x]\times\{0\}$ translated to the left by distance 1. The only remaining part of $[0,x]\times\{0\}$ we need to reach is $[r,x-r']\times\{0\}$. If $r+r'\geq x$, then we are done, so suppose $r+r'<x$. The required result follows immediately from Lemma~\ref{lem:RectangleWiggle} when we observe that there is a rectangle $R=[r,1+x-r']\times[h]\subseteq T$ for some $h>0$, whose base has width $w=1+x-r'-r$. Indeed, the lemma then implies that we can reach $[r,r+(w-1)]\times\{0\}$, with $r+(w-1)=x-r'$.
\end{proof}

In this proof, we have used the existence of small balls, but these balls may actually be reasonably large, so that the steps may make significant headway along $P_0P_1$. However, when the angle at $P_2$ is very large, this wiggle space may be limited. 

\begin{lemma}\label{lem:ObtuseTriangleConnected}
Let $T\subseteq \R^2$ be an obtuse triangle with radius $r(T)=1$, then $\G[2][T]$ is connected.
\end{lemma}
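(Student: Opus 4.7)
The plan is to reduce the claim to connectivity of the longest side of $T$, and to establish that via Lemma~\ref{lem:TriangleWiggle} applied to a family of shrunken sub-triangles. Since $T$ is obtuse with $r(T)=1$, a standard fact is that the minimum enclosing ball of $T$ has the longest side (opposite the obtuse angle) as a diameter, so I would fix coordinates with $P^0=(-1,0)$, $P^1=(1,0)$, midpoint $M=(0,0)$, and apex $P^2=(a,b)$ where $b>0$ and $a^2+b^2<1$ (strict, since the angle at $P^2$ is strictly obtuse). Write $L=P^0P^1\subseteq T$ for the base.

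First I would show that every $Q\in T$ is adjacent in $\G[2][T]$ to some point of $L$. Setting $f(p)=d(Q,(1-p)P^0+pP^1)$, the triangle inequality gives $f(0)+f(1)\ge d(P^0,P^1)=2$, so $\max\{f(0),f(1)\}\ge 1$, while $f(1/2)=d(Q,M)\le r(T)=1$. The intermediate value theorem then yields $p^*$ with $f(p^*)=1$, producing a neighbour of $Q$ on $L$.

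The heart of the argument is showing that $L$ itself lies in a single component of $\G[2][T]$. Applying Lemma~\ref{lem:TriangleWiggle} directly to $T$ is forbidden: the only point at distance exactly $1$ from both $P^0$ and $P^1$ is $M$, which sits on the boundary rather than in the relative interior of $T$. To work around this, for small $\epsilon>0$ I would consider the sub-triangle $T_\epsilon=\Conv(\{B^0_\epsilon,B^1_\epsilon,P^2\})\subseteq T$, where $B^0_\epsilon=(-1+\epsilon,0)$ and $B^1_\epsilon=(1-\epsilon,0)$. The point $A_\epsilon=(0,\sqrt{2\epsilon-\epsilon^2})$ is at distance $1$ from both $B^0_\epsilon$ and $B^1_\epsilon$, and lies in the relative interior of $T_\epsilon$ for all sufficiently small $\epsilon$ (since $b>0$ keeps the height of $T_\epsilon$ at $x=0$ bounded below while $\sqrt{2\epsilon-\epsilon^2}\to 0$). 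Lemma~\ref{lem:TriangleWiggle}, applied with $1+x=2-2\epsilon$, then places the segment $[-1+\epsilon,-\epsilon]\times\{0\}$ in one component of $\G[2][T_\epsilon]\subseteq \G[2][T]$; applying it again with the roles of $B^0_\epsilon$ and $B^1_\epsilon$ reversed covers $[\epsilon,1-\epsilon]\times\{0\}$ as well; and the two-edge path $B^0_\epsilon,A_\epsilon,B^1_\epsilon$ merges these pieces. Letting $\epsilon\to 0$, the nested wiggle ranges overlap and their union is $(-1,0)\cup(0,1)\subseteq L$.

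To absorb the three boundary vertices $P^0$, $P^1$, and $M$, I would introduce the auxiliary point $P^*_\epsilon=(-\epsilon/2,\sqrt{\epsilon-\epsilon^2/4})$, which for small $\epsilon$ lies in $T$ and, by a direct calculation, satisfies $d(P^0,P^*_\epsilon)=d(B^1_\epsilon,P^*_\epsilon)=1$. This gives a two-step path from $P^0$ through $P^*_\epsilon$ to $B^1_\epsilon$, placing $P^0$ in the common component; by symmetry $P^1$ joins, and $M$ is adjacent to $P^0$ directly. Combined with the first step, every vertex of $\G[2][T]$ then lies in one component. I expect the main obstacle to be precisely this boundary issue: the natural equidistant point is forbidden by the lemma at $\epsilon=0$, so one has to perturb carefully enough that $A_\epsilon$ stays interior while still recovering $M$, $P^0$, and $P^1$ via the separate $P^*_\epsilon$ construction.
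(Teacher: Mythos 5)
Your proof is correct, but it routes the key step differently from the paper. Both arguments begin identically: the m.e.b.\ of an obtuse triangle has the longest side as a diameter, and an intermediate value argument shows every point of $T$ has a neighbour on that side, reducing everything to connectivity of the base. For the base, the paper does \emph{not} invoke Lemma~\ref{lem:TriangleWiggle} (precisely because, as you observe, the unique equidistant point $M$ lies on the boundary rather than in the relative interior); instead it re-runs the proof of that lemma by hand with $A=M$ on the base, reaching small intervals around the endpoints via explicit circle arcs, translating them inward by one unit, and filling the remaining gaps with Lemma~\ref{lem:RectangleWiggle}. You instead keep Lemma~\ref{lem:TriangleWiggle} as a black box and restore its hypothesis by perturbation, applying it to the shrunken triangles $T_\epsilon$ with interior equidistant points $A_\epsilon$, merging the two resulting intervals through the two-edge path $B^0_\epsilon, A_\epsilon, B^1_\epsilon$, taking a nested union as $\epsilon\to 0$, and absorbing the three exceptional points $P^0$, $P^1$, $M$ with the explicit two-step paths through $P^*_\epsilon$ (your distance computations there check out). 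Your version is more modular and avoids duplicating the wiggle argument; its cost is that the number of steps needed to reach a base point depends on the $\epsilon$ chosen for it, so it yields connectivity without the uniform step bounds that the paper's direct construction provides and later leans on when asserting finiteness of the diameter in Theorem~\ref{the:ConvexConnected}. For the lemma as stated, which claims only connectedness, your argument is complete.
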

\begin{proof}
As $T$ is obtuse, the radius of $T$ is half of the longest side of $T$. Hence, without loss of generality, let the triangle be the convex hull of the points $(0,0)$, $(2,0)$ and $P=(P_1,P_2)$ with $P_1,P_2>0$. We prove that there is a sequence of steps between any two points in base $B$ of the triangle, i.e., the line segment between $(0,0)$ and $(2,0)$. The result then simply follows from the fact that each point in the triangle is at distance one from some point on $B$. To see this, let $Q=(Q_1,Q_2)\in T$ be arbitrary, and assume without loss of generality that $Q_1\leq 1$, so that we have $d(Q,(2,0))\geq 1$. As $(1,0)$ is the center of the m.e.b.~of $T$, we also have $d(Q,(1,0))\leq 1$. Applying the intermediate value theorem on the function $d(Q,\cdot)$ along the line segment $(0,1)(0,2)$, we can conclude that there is a point $b$ on $B$ such that $d(b,Q)=1$.

To see that there is a sequence of steps between any two points in base $B$ of the triangle we use a technique similar to the proof of Lemma~\ref{lem:TriangleWiggle}. We first note that we can reach some intervals $[0,r]\times\{0\}$ and $[2-r',2]\times\{0\}$ from $(1,0)$ by going via the arcs above $(1,0)$ of the circles around $(0,0)$ and $(2,0)$ of radius one (similar to Figure~\ref{fig:WiggleTriangle}, but with $A$ on $B$ instead of in the relative interior of $T$). Then, by translating these line segments to the left and the right by a distance of one, we can also reach the line segment $[1-r',1+r]\times\{0\}$. The remaining parts of $B$ are $[r,1-r']\times\{0\}$ and $[1+r,2-r']\times\{0\}$, which can both be reached through a rectangle $[r,2-r']\times[0,h]$ for some $h>0$ (Lemma~\ref{lem:RectangleWiggle}).
\end{proof}

\subsection{Connectedness of Simplices}
In this subsection, we will prove that $\G[d][S]$ is connected for any simplex $S$ of dimension at least $2$ with radius $r(S)=1$. To do this, we first show that each point in $S$ is at distance 1 from some point on a line segment $CP\subseteq S$ where $P$ is a vertex of $S$ with $d(C,P)=1$. Then, we show that we can reach each point on such a line segment from $C$ taking only unit steps. Together, these facts prove that $\G[d][S]$ is connected.

\begin{lemma}\label{lem:radiusToHalfSpace}
Let $S\subseteq\R^d$ be a simplex with m.e.b.~$B=\B^d(C,1)$ and let $P$ be a vertex of $S$ with $d(C,P)=1$. Then, each point in $H(C,P)\cap B$ is at distance one from some point on $CP$.
\end{lemma}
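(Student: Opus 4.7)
The plan is a direct application of the intermediate value theorem along the line segment $CP$. Parametrize the segment as $\gamma(t) = C + t(P-C)$ for $t \in [0,1]$, and fix an arbitrary point $Q \in H(C,P) \cap B$. Define the continuous function $f(t) = d(Q, \gamma(t))$. It suffices to show $f(0) \leq 1 \leq f(1)$, for then the intermediate value theorem guarantees some $t^\star \in [0,1]$ with $f(t^\star) = 1$, so $\gamma(t^\star) \in CP$ is the desired point.

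The inequality $f(0) \leq 1$ is immediate, since $\gamma(0) = C$ and $Q$ lies in the enclosing ball $B = \B^d(C,1)$. For the inequality $f(1) \geq 1$, I would expand
\[
d(Q,P)^2 = \bigl|(Q-C) - (P-C)\bigr|^2 = |Q-C|^2 + |P-C|^2 - 2(Q-C)\cdot(P-C).
\]
Because $Q \in H(C,P)$, the definition of the half-space gives $(P-C)\cdot(Q-C) \leq 0$, so $-2(Q-C)\cdot(P-C) \geq 0$. Combined with $|P-C| = 1$, this yields $d(Q,P)^2 \geq |Q-C|^2 + 1 \geq 1$, as required.

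The argument is essentially a one-line geometric observation dressed up by the IVT: the half-space $H(C,P)$ is precisely the set of points that are at least as far from $P$ as from $C$ when $d(C,P)=1$, so sliding along $CP$ from $C$ to $P$ forces the distance to $Q$ to cross the value $1$. There is no real obstacle here; the only thing to be slightly careful about is the sign convention in the definition of $H(C,P)$, which is the reason one uses the endpoint $P$ (not $C$) to obtain the lower bound. Note that the proof does not require $Q$ to lie in $S$, only in $H(C,P)\cap B$, and in particular works for every simplex vertex $P$ on the m.e.b.\ boundary regardless of the dimension $d$.
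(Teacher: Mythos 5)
Your proof is correct and takes essentially the same route as the paper's: both apply the intermediate value theorem to $d(Q,\cdot)$ along the segment $CP$, using $Q\in B$ to get $d(Q,C)\leq 1$ and $Q\in H(C,P)$ to get $d(Q,P)\geq 1$. The only cosmetic difference is that you verify the second inequality by an explicit inner-product expansion, whereas the paper cites the geometric fact that $\B^d(P,1)\cap H(C,P)=\{C\}$.
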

\begin{proof}
Let $Q\in H(C,P)\cap B$ be arbitrary. As $Q\in H(C,P)$ and $\B^d(P,1)\cap H(C,P)=\{C\}$, we have $d(P,Q)\geq 1$. Furthermore, because $Q\in B=\B^d(C,1)$ we have $d(C,Q)\leq 1$. Combining this with continuity of the function $d(Q,\cdot)$ along the line segment $CP$, the intermediate value theorem implies the existence of a point $U$ on $CP$ such that $d(Q,U)=1$.
\end{proof}

\begin{lemma}\label{lem:AllHalfSpacesCoverEverything}
Let $S\subseteq\R^d$ be a simplex with m.e.b.~$B=\B^d(C,r)$ and let $P^1,\ldots,P^n$ be the vertices of $S$ where $d(C,P^i)=r$, then $\bigcup_{i\in[n]}H(C,P^i)=\R^d$.
\end{lemma}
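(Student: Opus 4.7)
The plan is to argue by contradiction, using Seidel's lemma to place the center $C$ in the convex hull of the boundary vertices, and then use a simple linearity argument.

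First, I would observe that the m.e.b.\ of the simplex $S$ coincides with the m.e.b.\ of its vertex set, and that the vertices not lying on $\del B$ are irrelevant to the m.e.b.\ (removing an interior point from a point set cannot shrink its m.e.b.). Hence $B=\B^d(C,r)$ is the m.e.b.\ of $\{P^1,\ldots,P^n\}$, which all lie on $\del B$. Seidel's Lemma (Lemma~\ref{lem:Seidel}) then yields
\[
C\in\Conv(\{P^1,\ldots,P^n\}),
\]
so there exist $\lambda_1,\dots,\lambda_n\geq 0$ with $\sum_{i=1}^n\lambda_i=1$ and $C=\sum_{i=1}^n\lambda_i P^i$. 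Rearranging gives the key identity $\sum_{i=1}^n\lambda_i(P^i-C)=0$.

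Now suppose, for contradiction, that some $x\in\R^d$ lies outside every $H(C,P^i)$. By the definition of $H(C,P^i)$, this means $(P^i-C)\cdot(x-C)>0$ for every $i\in[n]$. Taking the convex combination with weights $\lambda_i$,
\[
\sum_{i=1}^n\lambda_i\,(P^i-C)\cdot(x-C) \;=\; \Bigl(\sum_{i=1}^n\lambda_i(P^i-C)\Bigr)\cdot(x-C) \;=\; 0\cdot(x-C)\;=\;0.
\]
Since at least one $\lambda_i$ is strictly positive (the weights sum to $1$), the left-hand side is a sum of nonnegative terms with at least one strictly positive term, hence is strictly positive. This contradiction establishes $\bigcup_{i\in[n]}H(C,P^i)=\R^d$.

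The only mildly delicate step is the first one, justifying that Seidel's Lemma applies to the subset of vertices on $\del B$ rather than to all the vertices of $S$; the rest is a one-line convex-combination computation. No further geometric input is needed.
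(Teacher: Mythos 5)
Your proof is correct and follows essentially the same route as the paper's: both invoke Seidel's Lemma to place $C$ in $\Conv(\{P^1,\ldots,P^n\})$ and then derive a contradiction from all the $P^i$ having strictly positive inner product with $x-C$; your convex-combination computation is just the algebraic form of the paper's ``all $P^i$ lie strictly on one side of the hyperplane through $C$'' argument. Your explicit remark that Seidel's Lemma must be applied to the boundary vertices only is a point the paper glosses over, and is a welcome addition.
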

\begin{proof}
Without loss of generality, assume that $C=(0,\ldots,0)$. Suppose there exists a point $Q\in\R^d$ such that $Q\not\in H(C,P^i)$ for all $i\in [n]$. This implies that $Q\cdot P^i>0$ for all $i\in [n]$, so $P^i\in \R^d\setminus H(C,Q)$ for all $i\in [n]$. In other words, all $P^i$ lie strictly on one side of the hyperplane perpendicular to $CQ$ through $C$. Then, by Lemma~\ref{lem:Seidel}, the center $C$ must satisfy $C\in\Conv(\{P^1\ldots,P^n\})\subseteq \R^d\setminus H(C,Q)$. This contradicts the fact that $C\in H(C,Q)$. We conclude that there is no $Q\in\R^d$ such that $Q\not\in H(C,P^i)$ for all $i\in [n]$; in other words, each $Q\in\R^d$ lies in $H(C,P^i)$ for some $i\in [n]$, so $\bigcup_{i\in[n]}H(C,P^i)=\R^d$.
\end{proof}

\begin{lemma}\label{lem:WellCenteredSimplex}
Let $S\subseteq\R^d$ be a well-centered simplex of dimension at least $2$ with radius $r(S)=1$, then $\G[d][S]$ is connected.
\end{lemma}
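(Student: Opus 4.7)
The plan is to execute the two-part strategy laid out in the paragraph preceding this lemma: (a) show every point of $S$ is at Euclidean distance 1 from some point of a segment $CP^i \subseteq S$ with $P^i$ a vertex of $S$ and $d(C,P^i)=1$, and (b) show every point of each such segment is reachable from $C$ in $\G[d][S]$. Together (a) and (b) place every point of $S$ in the connected component of $C$.

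For (a), I would first establish that every vertex of $S$ is at distance exactly $1$ from $C$. The subset $V \subseteq \{P^1,\ldots,P^{d+1}\}$ of vertices on the m.e.b.\ satisfies $C \in \Conv(V)$ by Lemma~\ref{lem:Seidel}; since $\Conv(V)$ is a face of the simplex $S$ and $C$ is in the relative interior of $S$ by well-centeredness, this face must be $S$ itself, forcing $V$ to include every vertex. Lemma~\ref{lem:AllHalfSpacesCoverEverything} then gives $\bigcup_i H(C,P^i)=\R^d$; combined with $S \subseteq B$ and Lemma~\ref{lem:radiusToHalfSpace}, every $Q\in S$ is at unit distance from some point on $CP^i \subseteq S$. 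This reduces the problem to linking every $CP^i$ to $C$ in $\G[d][S]$.

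For (b), the tool is Lemma~\ref{lem:TriangleWiggle}. Fix a vertex $P^i$. Because $C$ is in the relative interior of $S$, I can extend $CP^i$ through $C$ by some $\epsilon>0$ to a point $Q^i\in S$, giving a segment $Q^iP^i\subseteq S$ of length $1+\epsilon$. I would then apply Lemma~\ref{lem:TriangleWiggle} with $P^0=Q^i$ and $P^1=P^i$ to a triangle $T=\Conv(\{Q^i,P^i,A\})\subseteq S$ whose apex $A$ is in the relative interior of $T$ and satisfies $d(A,Q^i)=d(A,P^i)=1$. This yields paths in $\G[d][T] \subseteq \G[d][S]$ from $C$ (which lies in the first $\epsilon$-portion of $Q^iP^i$ from $Q^i$) to every point of that first $\epsilon$-portion, and a symmetric application covers the last $\epsilon$-portion near $P^i$; the interior rectangle-argument built into the proof of Lemma~\ref{lem:TriangleWiggle} then fills in the remaining middle of $Q^iP^i$, covering all of $CP^i$.

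The main obstacle is locating the apex $A$ inside $S$. The equidistant candidates in $\R^d$ form a $(d-2)$-sphere perpendicular to $CP^i$ at distance $h=\sqrt{1-((1+\epsilon)/2)^2}$ from the midpoint of $Q^iP^i$, and $h\to\sqrt{3}/2$ as $\epsilon\to 0$, so $A$ cannot be made close to $C$. This is where the hypothesis $\dim S\geq 2$ and the existence of further vertices $P^j$ at unit distance from $C$ become essential. I would select a 2-plane $\Pi$ through $CP^i$ oriented toward a suitable $P^j$ and use well-centeredness, together with the lateral extent of $S \cap \Pi$, to place $A$ on the equidistant sphere inside $S \cap \Pi$ and in the interior of $T$. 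If a single wiggle triangle does not suffice for some geometry of $S$, the fallback is to cover $CP^i$ by multiple overlapping wiggles built from different extension directions or to apply Lemma~\ref{lem:RectangleWiggle} directly to a thin rectangle within $S$ aligned with $CP^i$.
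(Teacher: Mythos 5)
Part (a) of your plan is sound and matches the paper's closing argument (all vertices lie on the m.e.b.\ by well-centeredness, then Lemma~\ref{lem:AllHalfSpacesCoverEverything} and Lemma~\ref{lem:radiusToHalfSpace} reduce everything to reaching the segments $CP^i$). The gap is in part (b). Your plan to apply Lemma~\ref{lem:TriangleWiggle} to the segment $Q^iP^i$ of length $1+\epsilon$ obtained by extending $CP^i$ slightly past $C$ requires an apex $A\in S$ with $d(A,Q^i)=d(A,P^i)=1$, which forces $A$ to sit at height roughly $\sqrt{3}/2$ off the line $CP^i$. You correctly identify this as the obstacle, but your proposed repairs do not close it. A well-centered triangle in $\R^2$ with vertices at angles $0$, $\pi-\delta$, $\pi+\delta$ on the unit circle (small $\delta$) has height only $O(\delta)$ above the segment from $C=(0,0)$ to $P^i=(1,0)$ near its midpoint, so no such $A$ exists in $S$; there is also no ``other 2-plane'' to choose in dimension $2$. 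The fallback of applying Lemma~\ref{lem:RectangleWiggle} to a thin rectangle aligned with $CP^i$ also fails: that lemma only lets you traverse $[0,x]\times\{0\}$ inside a rectangle of base $1+x$, and since $d(C,P^i)=1$ and $\diam(S)\le 2$, you cannot fit a long enough rectangle along $CP^i$ to make progress.

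The paper's resolution, which you are missing, is to never wiggle along $CP^i$ at all. Instead it takes two vertices $P$ and $Q$ with $Q\in H(C,P)$ (guaranteed by Lemma~\ref{lem:AllHalfSpacesCoverEverything}), so the edge $PQ$ has length at least $\sqrt{2}$, and applies Lemma~\ref{lem:TriangleWiggle} to the triangle $\Conv(\{P,Q,R\})$ with apex $A=C$: the hypotheses $d(C,P)=d(C,Q)=1$ and $C$ in the relative interior hold automatically by well-centeredness, with $R$ chosen so the triangle contains $C$. This makes a portion $QX$ of the edge $PQ$ reachable from $C$; concatenating $QX$ with a circle arc from $X$ to $C$ on $\Sp^{d-1}(P,1)$ (every point of which is reachable via $P$) gives a continuous reachable path from $Q$ to $C$, and the intermediate value theorem applied along that path, using $d(Q,x)\ge 1$ and $d(C,x)\le 1$ for $x\in CP$, then yields every point of $CP$. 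You would need to replace your part (b) with this indirect transfer argument for the proof to go through.
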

\begin{proof}
Let $B=\B^d(C,1)$ be the m.e.b.~of $S=\Conv(\{V^1,\ldots,V^n\})$, 
and let $P$ be an arbitrary vertex of $S$. Because $S$ is well-centered, we have $d(C,V^i)=1$ for all $i$ and therefore, by Lemma~\ref{lem:AllHalfSpacesCoverEverything}, $H(C,P)\cap \{V^1,\ldots,V^n\}\neq\emptyset$. 
Hence, $S$ has a vertex $Q\in H(C,P)\cap \{V^1,\ldots,V^n\}$, and the 1-face $PQ$ has length at least $\sqrt{2}$.
Because $S$ is well-centered, $C$ is in the relative interior of $S$, so there is a point $R\in S$ such that the triangle $T=\Conv(\{P,Q,R\})$ contains the center $C$ of the m.e.b.~of $S$. 

Let $X$ be the point at distance $|PQ|-1$ from $Q$ on $PQ$. 
Applying Lemma~\ref{lem:TriangleWiggle} with $P^0=Q$, $P^1=P$, $P^2=R$ and $A=C$, we see that any point on $QX$ can be reached from $C$ through $\G[d][T]$.
Furthermore, there is a continuous path (circle arc) from $X$ to $C$ through $\Sp^{d-1}(P,1)\cap T$. 
All of the points on this path are at distance $1$ from $P$, so they can be reached from $C$ via $P$ in $\G[d][T]$. The union of this circle arc and the line segment $QX$ forms a continuous path $\mathcal{P}(Q,C)$ from $Q$ to $C$ through $T$, and each of the points on this path can be reached from $C$ in $\G[d][T]$. 

Now let $x\in CP$ be arbitrary. Using continuity of $d(x,\cdot)$ along $\mathcal{P}(Q,C)$ and the fact that $d(Q,x)\geq 1$ and $d(C,x)\leq 1$, the intermediate value theorem implies that there is a point $y\in\mathcal{P}(Q,C)$ with $d(x,y)=1$, so $x$ can be reached from $C$ in $\G[d][T]$ via y. As we chose $x\in CP$ arbitrarily, this shows that each point on $CP$ can be reached from $C$ in $\G[d][T]$. Moreover, as we chose an arbitrary vertex $P$ of $S$, we conclude that, for each vertex $V^i$ of $S$, all points on $CV^i$ can be reached from $C$ in $\G[d][S]$.

Now note that each point in $H(C,V^i)\cap B$ is at distance one from some point on $CV^i$, and $\bigcup_{i\in [n]}H(C,V^i)\cap B = B$ (Lemma~\ref{lem:AllHalfSpacesCoverEverything}). Finally, as $S\subseteq B$, each point in $S$ can be reached from $C$ in $\G[d][S]$. In other words, $\G[d][S]$ is connected.
\end{proof}

\begin{proposition}\label{prop:SimplexConnected}
Let $S\subseteq \R^d$ be a simplex with radius $r(S)=1$ of dimension at least $2$, then $\G[d][S]$ is connected.
\end{proposition}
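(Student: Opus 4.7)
The plan is to reduce to the well-centered case handled by Lemma~\ref{lem:WellCenteredSimplex}. If $S$ is already well-centered, that lemma applies directly, so I would assume otherwise and let $C$ be the center of the m.e.b.\ $B = \B^d(C,1)$. Let $F = \Conv(U)$ be the smallest face of $S$ whose relative interior contains $C$, with $U$ a subset of the vertices of $S$. The vertices in $U$ must all lie on the sphere $\Sp^{d-1}(C,1)$: by Lemma~\ref{lem:Seidel}, $C \in \Conv(W)$ where $W$ is the full set of sphere vertices, and the face of $S$ in whose relative interior $C$ lies must be carved out by a subset of $W$ alone. Consequently, $F$ is itself a well-centered simplex with circumcenter $C$ and $r(F) = 1$, and the argument then splits on $\dim F$.

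If $\dim F \geq 2$, applying Lemma~\ref{lem:WellCenteredSimplex} to $F$ gives that $\G[d][F]$ is connected, so it only remains to connect an arbitrary $x \in S$ into $F$. Lemma~\ref{lem:AllHalfSpacesCoverEverything} applied to the well-centered $F$ yields $\bigcup_{V \in U} H(C,V) = \R^d$, and since $x \in S \subseteq B$, I would pick $V \in U$ with $x \in H(C,V) \cap B$ and invoke Lemma~\ref{lem:radiusToHalfSpace} to produce a point $y \in CV \subseteq F$ with $d(x,y) = 1$. This gives an edge in $\G[d][S]$ from $x$ into the connected set $F$, so $x$ is connected to $C$.

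The hard part is the subcase $\dim F = 1$, where $F$ is too degenerate for Lemma~\ref{lem:WellCenteredSimplex}. In this case $F$ is a segment $PQ$ with $P, Q \in W$ and $C$ as its midpoint, so $|PQ|=2$ and $P,Q$ are antipodal on $\Sp^{d-1}(C,1)$; hence $H(C,P) \cup H(C,Q) = \R^d$ already, and the one-step argument of the previous case still shows that every $x \in S$ is at unit distance from some point of $F$. To connect $C$ to all of $F$ within $S$, I would pick any vertex $V^j$ of $S$ outside $F$ (which exists because $\dim S \geq 2$) and work inside the triangle $T = \Conv(\{P,Q,V^j\}) \subseteq S$. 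Since $V^j \in B$, the angle of $T$ at $V^j$ is at least $90^\circ$, and $PQ$ being a diameter of $B$ forces $r(T) = 1$ with center $C$. Then Lemma~\ref{lem:ObtuseTriangleConnected} gives the connectedness of $\G[d][T]$; its proof carries over to the right-angled case because having the apex lie on the circumscribed circle does not obstruct the short wiggling arcs near the midpoint of the base that the argument relies on. This connects $C$ to every point of $F$ inside $\G[d][T] \subseteq \G[d][S]$, closing the argument.
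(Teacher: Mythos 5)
Your proposal is correct and follows essentially the same route as the paper: reduce to the face $F$ of $S$ whose relative interior contains the circumcenter $C$, apply Lemma~\ref{lem:WellCenteredSimplex} when $\dim F\geq 2$ and Lemma~\ref{lem:ObtuseTriangleConnected} when $F$ is a diameter segment, then connect every other point of $S$ to $F$ by a single unit step. The only cosmetic difference is in the segment case: you take the triangle apex to be a vertex of $S$, which may lie on the sphere and so forces you to extend Lemma~\ref{lem:ObtuseTriangleConnected} to right triangles (a valid extension, since its proof only uses that the circumcenter is the midpoint of the longest side), whereas the paper sidesteps this by choosing an apex close to $F$ so that the triangle is strictly obtuse.
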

\begin{proof}
Let $C$ be the center of the m.e.b.~$B$ of $S$, and note that $C$ lies in the relative interior of a face $F$ of $S$ of dimension at least 1. Moreover, the m.e.b.~of $F$ is $B$. 
If $F$ has dimension one, then there exists a point $R\in S\setminus F$ because $S$ has dimension at least $2$. Hence, by choosing $R$ close to $F$, $S$ contains the obtuse triangle $T=\Conv(\{R\}\cup F)$, and $\G[d][T]$ is connected by Lemma~\ref{lem:ObtuseTriangleConnected}. In particular, $F$ can be reached in its entirety from $C$ in $\G[d][S]$.

If $F$ has dimension at least 2, then $\G[d][F]$ is connected, because $F$ is a well-centered simplex of radius $1$ (Lemma~\ref{lem:WellCenteredSimplex}). 
In both cases, $\G[d][F]$ is connected and each point in $B$ is at distance one from some point on $F$. Hence, we conclude that $\G[d][S]$ is connected.
\end{proof}

The following lemma can improve the diameter bound resulting from the proofs above. It states that in a well-centered simplex $\Conv(\{V^1, \ldots, V^{d+1}\})$ of dimension $d$, each point on a line segment $CV^i$ is at most $d$ steps away from a point on $CV^j$ for each $j\neq i$. Hence, its consequence is that, during a sequence of unit steps between any two points in a well-centered simplex, we only have to `wiggle' once on one of the edges (i.e., apply the sequence of steps from Lemma~\ref{fig:WiggleTriangle} at most once). As the wiggling part is the part that may take the most steps, this could make the diameter bound a lot smaller.

\begin{lemma}\label{lem:OneRadiusThenAllRadii}
Let $S\subseteq\R^d$ be a simplex with m.e.b.~$\B^d(C,1)$,
and $P^1,\ldots,P^n$ the vertices of $S$ with $d(C,P^i)=1$. Then, for each $i\in[n]$ and $u\in CP^i$, there exists $v\in CP^1$ such that there is a $uv$-path of at most $n-1$ steps in $\G[d][S]$.
\end{lemma}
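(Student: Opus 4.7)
The plan is to associate with $S$ an auxiliary finite graph $H$ on vertex set $[n]$, with an edge between $i$ and $j$ exactly when $(P^i - C) \cdot (P^j - C) \leq 0$, i.e., when the spokes $CP^i$ and $CP^j$ meet at an angle of at least $\pi/2$. I would then show (i) every edge $\{i, j\}$ of $H$ yields a one-step unit-distance hop from an arbitrary point $u \in CP^i$ to some point $v \in CP^j$ in $\G[d][S]$, and (ii) $H$ is connected. Since $H$ has $n$ vertices, its diameter is at most $n-1$. Given $i \in [n]$ and $u \in CP^i$, I would then pick any path $i = i_0, i_1, \ldots, i_k = 1$ in $H$ with $k \leq n - 1$ and iteratively apply (i) to produce $u = u_0, u_1, \ldots, u_k$ with $u_\ell \in CP^{i_\ell} \subseteq S$ (spokes lie in $S$ by convexity of $S$) and $d(u_{\ell-1}, u_\ell) = 1$; then $v = u_k \in CP^1$ is the desired endpoint.

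For (i), I would parametrize $u = C + t(P^i - C)$ and $v = C + s(P^j - C)$ with $t, s \in [0,1]$. Since $|P^i - C| = |P^j - C| = 1$, the equation $d(u, v) = 1$ becomes the quadratic
\[
s^2 - 2t\cos\theta_{ij}\, s + (t^2 - 1) = 0,
\]
where $\theta_{ij}$ is the angle at $C$ between the two spoke directions. A direct check shows that when $\cos\theta_{ij} \leq 0$, the root $s_+ = t\cos\theta_{ij} + \sqrt{1 - t^2 \sin^2\theta_{ij}}$ lies in $[0, 1]$ for every $t \in [0, 1]$, giving the required $v \in CP^j$.

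The main obstacle is (ii), the connectedness of $H$. I would argue by contradiction: assume $H$ separates into two nonempty parts $A \sqcup B = [n]$ with no edges between, so $(P^a - C) \cdot (P^b - C) > 0$ for all $a \in A$ and $b \in B$. By Seidel's lemma (Lemma~\ref{lem:Seidel}) applied to the m.e.b.\ of $S$, write $C = \sum_k \lambda_k P^k$ with $\lambda_k \geq 0$ and $\sum_k \lambda_k = 1$, equivalently $\sum_k \lambda_k (P^k - C) = 0$. Split this identity as $\mu + \nu = 0$ where $\mu = \sum_{a \in A} \lambda_a (P^a - C)$ and $\nu = \sum_{b \in B}\lambda_b (P^b - C)$, so $\mu \cdot \nu = -|\mu|^2 \leq 0$. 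If both $\sum_{a\in A}\lambda_a$ and $\sum_{b \in B}\lambda_b$ are strictly positive, then expanding $\mu \cdot \nu$ as a double sum over $A \times B$ yields a strictly positive quantity (all summands are products of non-negative $\lambda$'s and strictly positive cosines, with at least one nonzero), contradicting $\mu \cdot \nu \leq 0$. Otherwise, say $\sum_{a \in A}\lambda_a = 0$; then $C \in \Conv(\{P^b\}_{b \in B})$, and taking the inner product of $\sum_b \lambda_b (P^b - C) = 0$ with $P^a - C$ for any $a \in A$ yields $\sum_b \lambda_b \cos\theta_{ab} = 0$, contradicting the fact that every $\cos\theta_{ab} > 0$ while the $\lambda_b$ are non-negative and sum to $1$. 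This case analysis is the step I expect to require the most care; once it is in place, (i) is a routine quadratic check and the chaining argument in the first paragraph finishes the proof.
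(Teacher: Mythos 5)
Your proposal is correct, and it rests on the same skeleton as the paper's proof: the auxiliary graph on the spokes $CP^1,\ldots,CP^n$ with $i\sim j$ exactly when $(P^i-C)\cdot(P^j-C)\le 0$ (equivalently $P^j\in H(C,P^i)$), followed by the observation that connectivity of an $n$-vertex graph yields a path of at most $n-1$ hops. The differences are in how the two ingredients are proved, and for the harder one your route is genuinely different and shorter. For the single hop, you solve the quadratic explicitly and check $s_+=t\cos\theta_{ij}+\sqrt{1-t^2\sin^2\theta_{ij}}\in[0,1]$; the paper instead cites Lemma~\ref{lem:radiusToHalfSpace}, an intermediate value theorem argument, and relies on the symmetry of the adjacency relation to get the direction of reachability you make explicit. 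Both are fine. For connectivity of the auxiliary graph, the paper runs a longer geometric contradiction: it takes a maximal connected component $A$, shows $A$ lies in a half-space, analyses the center $m$ of the m.e.b.\ of $A$ and a cone containment to force $m=C$, deduces via Lemma~\ref{lem:Seidel} that the m.e.b.\ of $A$ is $\B^d(C,1)$, and finishes with Lemma~\ref{lem:AllHalfSpacesCoverEverything}. Your argument replaces all of this with a short sign computation: writing $C=\sum_k\lambda_kP^k$ over the boundary vertices (the same appeal to Seidel's lemma the paper makes inside Lemma~\ref{lem:AllHalfSpacesCoverEverything}; one can also get it from Lemma~\ref{lem:EnclosedSimplex} plus Caratheodory), splitting $\sum_k\lambda_k(P^k-C)=0$ as $\mu+\nu=0$ across the putative disconnection, and contradicting $\mu\cdot\nu=-\lvert\mu\rvert^2\le 0$ with the termwise-positive double sum; your separate treatment of the degenerate case $\sum_{a\in A}\lambda_a=0$ (dotting $\nu=0$ against some $P^a-C$) is necessary and handled correctly. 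What your approach buys is a substantially more economical proof of the only delicate step; what the paper's buys is the stronger intermediate fact that the m.e.b.\ of the component $A$ coincides with that of $S$, which it then feeds back into Lemma~\ref{lem:AllHalfSpacesCoverEverything}, but that extra strength is not needed for the statement of the lemma.
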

\begin{proof}
By Lemma~\ref{lem:radiusToHalfSpace}, we can reach each point on $CP^j$ from some point on $CP^i$ if $P^j\in H(C,P^i)$. Hence, we define the graph $G=(V,E)$, where $V=\{P^1,\ldots,P^n\}$ and $\{P^i,P^j\}\in E$ iff $P^j\in H(C,P^i)$. To prove the lemma, we just have to show that $G$ is connected. For simplicity, we assume $C=(0,\ldots,0)$, so that $x\in H(C,y)$ precisely when $x\cdot y \leq 0$.

If $G$ is connected, we are done, so we assume that $G$ is not connected, which implies it has a maximally connected subset $A\subsetneq V$. 
The proof that $G$ is connected then uses the following strategy. 
We will first show that $A$ must be contained in some half-space $H(C,Q)$, then we will show that in fact all of $V$ must lie in one such half-space, where $V\setminus A$ cannot lie on the bounding hyperplane. This will then imply that $C\in\Conv(A)$, which, by Lemma~\ref{lem:Seidel} proves that the m.e.b. of $A$ is $\B^d(C,1)$. Finally, using that $\bigcup_{a\in A}H(C,a)=\R^d$ (Lemma~\ref{lem:AllHalfSpacesCoverEverything}), we conclude that each $P^i$ is in some $H(C,a)$, so $G$ is connected.

Assume for the sake of contradiction that there is no $x\in\R^d$ such that $A\subseteq H(C,x)$. Then there is also no $x$ such that $A\cap H(C,x)=\emptyset$, which implies that $A\cap H(C,x)\neq\emptyset$ for all $x\in\R^d$. This holds in particular for all $x\in V\setminus A$, which contradicts the maximal connectedness of $A$ in $G$. Therefore, we may conclude that $A\subseteq H(C,Q)$ for some $Q\in\R^d$.

Now let $m$ be the center of the m.e.b.~of $A$, and assume that $m\neq C$. This implies that the minimum $\min_{x\in \Sp^{d-1}(C,1)}\max_{a\in A} d(x,a)$ is attained at the point $m'$ found by projecting $m$ onto $\Sp^{d-1}(C,1)$ via the line from $C$ to $m$. 
Moreover, the distance between two points on the sphere is monotone decreasing with the inner product of the two points. Therefore, the fact that $Q$ satisfies $\min_{a\in A} -Q\cdot a\geq 0$, implies we also have $\min_{a\in A} m'\cdot a\geq 0$, and we can conclude that $m,m'\in \Cone(A)$ and $A\subseteq H(C,-m)$.

By maximal connectedness of $A$, we know that $A\subseteq \R^d\setminus \bigcup_{b\in V\setminus A}H(C,b)$, which implies $m\in \Cone(A)\setminus\{0\}\subseteq \R^d\setminus \bigcup_{b\in V\setminus A}H(C,b)$. In other words, $m\not\in H(C,b)$ for all $b\in V\setminus A$. This implies that $b\in\R^d\setminus H(C,m)$ for all $b\in V\setminus A$, so we conclude that $V\setminus A \subseteq \R^d\setminus H(C,m)$ and $A\subseteq H(C,-m)$. 

Now note that $C=(0,\ldots, 0)$ must lie in $\Conv(V)$ by Lemma~\ref{lem:Seidel}, and this convex combination $C=\sum_i\lambda_iP^i$ (with $\sum_i\lambda_i=1$ and $0\leq \lambda_i\leq 1$) cannot use any elements from $V\setminus A$, as they all have a nonzero component in the $Cm$ direction, which cannot be compensated for by any other part of the convex combination. More precisely, $a\cdot m \geq 0$ for all $a\in A$ (because $a\in H(C,-m)$), and $b\cdot m > 0$ for all $b\in V\setminus A$ (because $b\not\in H(C,m)$). So, for the equality $0=m\cdot C = \sum_i m\cdot \lambda_iP^i$ to hold, we need $\lambda_i$ to be zero for all $i$ such that $P^i\not\in A$. Hence, $C$ is a convex combination of points in $A$ and $C\in\Conv(A)$. This then  implies that $\B^d(C,1)$ is the m.e.b.~of $A$ (Lemma~\ref{lem:Seidel}). This leads us to the contradition that both $m=C$ and $m\neq C$, so we conclude that the assumption $m\neq C$ is false.

Finally, we have that $m=C$, and we may use $m=C\in \Conv(A)$. Hence, the m.e.b.~of $A$ is equal to the m.e.b.~$\B^d(C,1)$ of $S$ (Lemma~\ref{lem:Seidel}). Then, applying Lemma~\ref{lem:AllHalfSpacesCoverEverything} to $A$ and noting that $A$ is connected in $G$ proves the result.
\end{proof}

\section{Closed Convex Sets}
In this section, we use the connectedness results for simplices to characterize the connectedness of closed convex sets. Clearly, the radius of such a connected set must be at least one, because, otherwise, the center of the m.e.b.~has no neighbours in the induced unit distance graph. We will show that this condition is actually sufficient for a closed convex set to be connected, as long as the set has affine dimension at least 2. We first focus on bounded sets, and consider unbounded sets afterwards.

\begin{lemma}\label{lem:Scale}
Let $X\subseteq \R^d$ be a bounded convex set such that $\G[d][X]$ is connected. Then for all $\lambda\geq 1$, $\G[d][\lambda X]$ is connected as well.
\end{lemma}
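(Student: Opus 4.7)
The plan is to cover $\lambda X$ by overlapping translates of $X$ and chain a unit-distance path through them, using the connectedness of each translate. The case $\lambda = 1$ is trivial, so assume $\lambda > 1$. I would first verify the identity $\lambda X = X \oplus (\lambda - 1) X$ for convex $X$: one direction follows from $\lambda x = x + (\lambda - 1) x$, and the other from the observation that $x + (\lambda - 1) y = \lambda\left(\tfrac{1}{\lambda} x + \tfrac{\lambda - 1}{\lambda} y\right)$, which lies in $\lambda X$ by convexity of $X$. As a consequence, for every $c \in (\lambda - 1) X$ the translate $X + c$ is a subset of $\lambda X$, and its induced unit-distance graph $\G[d][X + c]$ is connected, being isomorphic to $\G[d][X]$ via the isometry of translation.

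Given $u, v \in \lambda X$, the points $u/\lambda$ and $v/\lambda$ lie in $X$, so $u$ belongs to the translate $X_0 := X + (\lambda - 1) u/\lambda$ and $v$ belongs to $X_1 := X + (\lambda - 1) v/\lambda$. I would interpolate by setting $c(t) := (\lambda - 1)\bigl[(1-t) u/\lambda + t v/\lambda\bigr] \in (\lambda - 1) X$ and $X_t := X + c(t) \subseteq \lambda X$, obtaining a continuous family of connected translates of $X$ inside $\lambda X$ that begins at a translate containing $u$ and ends at one containing $v$.

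The crux is showing that $X_{t_1}$ and $X_{t_2}$ overlap whenever $|t_1 - t_2|$ is sufficiently small, which reduces to showing $c(t_1) - c(t_2) \in X \oplus (-X)$. For this I would use that $X \oplus (-X)$ is convex, symmetric under negation, and contains the origin, so $\alpha \cdot (X \oplus (-X)) \subseteq X \oplus (-X)$ for every $\alpha \in [-1, 1]$. A direct computation gives $c(t_1) - c(t_2) = (\lambda - 1)(t_1 - t_2)(v - u)/\lambda$, and since $(v - u)/\lambda \in X \oplus (-X)$ (as both $u/\lambda, v/\lambda \in X$), the containment holds whenever $(\lambda - 1)|t_1 - t_2| \leq 1$.

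To finish, I would partition $[0, 1]$ as $0 = t_0 < t_1 < \cdots < t_m = 1$ with consecutive gaps at most $1/(\lambda - 1)$, choose $z_i \in X_{t_i} \cap X_{t_{i+1}}$ for $0 \leq i \leq m-1$, and set $z_{-1} := u$, $z_m := v$. Since $z_{i-1}$ and $z_i$ both lie in $X_{t_i}$ and $\G[d][X_{t_i}]$ is connected, there is a unit-distance path from $z_{i-1}$ to $z_i$ inside $X_{t_i} \subseteq \lambda X$; concatenating these yields a path from $u$ to $v$ in $\G[d][\lambda X]$. The main obstacle, as described above, is justifying the overlap step, which hinges on the absorption property of $X \oplus (-X)$ under small scalings; the rest is routine bookkeeping.
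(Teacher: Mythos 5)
Your proof is correct, but it takes a genuinely different route from the paper's. The paper works incrementally: placing the m.e.b.\ of $X$ at the origin, it shows that for $\lambda\geq 1$ with $\lambda r\leq r+1$ every point $x\in\lambda X\setminus X$ has a neighbour in $X$, obtained by applying the intermediate value theorem along the segment from $P=x/\lambda$ (which satisfies $d(x,P)\leq 1$ thanks to the radius bound) to a point $Q\in X$ with $d(x,Q)\geq 1$; the existence of $Q$ is where connectedness of $\G[d][X]$ enters, via the implication that $r(X)\geq 1$. General $\lambda$ is then handled by iterating this one-increment step. Your argument instead uses the decomposition $\lambda X=X\oplus(\lambda-1)X$ to cover $\lambda X$ by translates of $X$, and chains a path through a finite sequence of pairwise-overlapping translates, with the overlap guaranteed by the absorption property of the symmetric convex set $X\oplus(-X)$; all the details (the Minkowski identity, the overlap criterion $c(t_1)-c(t_2)\in X\oplus(-X)$, and the bookkeeping with the points $z_i$) check out. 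What each approach buys: the paper's method attaches each new point to the old set by a single unit step, so each radius increment adds only $O(1)$ to the graph diameter, whereas your chaining pays a full $\diam(\G[d][X])$ per translate; on the other hand, your proof is more self-contained and more general --- it never invokes the m.e.b., never needs the deduction that connectedness forces $r(X)\geq 1$, and would apply verbatim to unbounded convex $X$, whereas the paper's argument genuinely relies on boundedness to run its induction on the radius.
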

\begin{proof}
Without loss of generality, suppose the m.e.b.~of $X$ is $B=\B^d(0,r)$. We prove that $\G[d][\lambda X]$ is connected for all $\lambda\geq 1$ such that $\lambda r\leq r+1$ (i.e., we increase the radius by at most 1). The result then simply follows by repeated application of this result.

Let $\lambda\geq 1$ be such that $\lambda r\leq r+1$, and let $x$ be any point in $\lambda X$. Because $X$ is convex and $0\in X$ we have $X\subseteq \lambda X$. Hence, we only have to prove that for each $x\in \lambda X\setminus X$, there is a point $y\in X$ with $d(x,y)=1$. To prove this, we find two points $P,Q\in X$ at distance $d(P,x)\leq 1$ and $d(Q,x)\geq 1$. The (continuous) line segment $PQ$ is contained in $X$, so, by the intermediate value theorem, there is a point $R\in X$ on $PQ$ such that $d(R,x)=1$.

First we show that we can take $P=x/\lambda$. Because $x\in \lambda X$, we have $x/\lambda\in X$; and because $|x|\leq r(\lambda X)\leq r+1$ and $1-1/\lambda\leq 1/(r+1)$, we have  $d(x,x/\lambda)=|x|(1-1/\lambda)\leq 1$. 
Now we show that there exists a $Q\in X$ with $d(x,Q)\geq 1$. Suppose there weren't such a $Q$, then $X\subseteq \B^d(x,1)$. It then follows that $r(X)< 1$ as $x$ is not the center of the m.e.b. of $X$. However, if $r(X)<1$, then $\G[d][X]$ could not have been connected, which contradicts the conditions of the lemma. Hence, there exists a $Q\in X$ with $d(x,Q)\geq 1$.
\end{proof}

\begin{lemma}\label{lem:EnclosedSimplex}
Let $X\subseteq \R^d$ be a compact and convex set and let $B$ be its m.e.b.~with center $C$, then $C\in \Conv(\del B\cap X)$.
\end{lemma}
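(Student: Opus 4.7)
The plan is to proceed by contradiction and exploit the minimality of $B$. Suppose $C \notin \Conv(\del B \cap X)$. Since $X$ is compact and $\del B$ is closed, $\del B \cap X$ is compact, so its convex hull $K := \Conv(\del B \cap X)$ is a compact convex subset of $\R^d$ (by Carath\'eodory's theorem). Applying the separating hyperplane theorem to $C$ and the closed convex set $K$, and using compactness of $K$ so that the infimum is attained, yields a unit vector $v \in \R^d$ and a constant $\alpha > 0$ such that $(y - C) \cdot v \geq \alpha$ for all $y \in K$, in particular for every $y \in \del B \cap X$.

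The idea is then to shift the center from $C$ to $C_t := C + tv$ for a small $t > 0$ and show that the resulting ball $\B^d(C_t, r')$ still contains $X$ for some $r' < r$, contradicting the fact that $B = \B^d(C, r)$ is the \emph{minimal} enclosing ball. For $y \in \del B \cap X$, a direct expansion gives
\[
|y - C_t|^2 = |y - C|^2 - 2t(y - C) \cdot v + t^2 \leq r^2 - 2t\alpha + t^2,
\]
which is strictly less than $r^2$ for every $0 < t < 2\alpha$.

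The main obstacle is handling points $y \in X$ that lie strictly inside $B$ but arbitrarily close to $\del B$, for which $(y - C) \cdot v$ is not a priori controlled by the separation bound. I would resolve this by a compactness argument: if no uniform lower bound existed, one could find a sequence $(y_n) \subseteq X$ with $|y_n - C| \to r$ and $(y_n - C) \cdot v < \alpha/2$; a convergent subsequence (possible by compactness of $X$) would then produce a limit point in $\del B \cap X$ violating the bound $\alpha$. Hence there exists $\varepsilon > 0$ such that every $y \in X$ with $|y - C| \geq r - \varepsilon$ satisfies $(y - C) \cdot v \geq \alpha/2$, and the calculation above (with $\alpha$ replaced by $\alpha/2$) then gives $|y - C_t| < r$ for all small $t > 0$; while for $y$ with $|y - C| < r - \varepsilon$, the triangle inequality gives $|y - C_t| < r - \varepsilon + t < r$ whenever $t < \varepsilon$. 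Choosing $t > 0$ small enough for both cases and using compactness of $X$ to pass to the maximum yields an enclosing ball of radius strictly less than $r$, the desired contradiction.
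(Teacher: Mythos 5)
Your proof is correct and follows essentially the same strategy as the paper: assume $C\notin\Conv(\del B\cap X)$, strictly separate $C$ from that hull by a hyperplane, and perturb the center toward the hull to obtain an enclosing ball of strictly smaller radius, contradicting minimality. The only (minor) technical difference is how points of $X$ near $\del B$ are controlled: the paper fattens the hull to $\Conv(\del B\cap X)\oplus\B^d(0,\epsilon)$ before separating, whereas you separate from the hull itself and recover a uniform inner-product bound near the sphere by a sequential compactness argument -- both devices work.
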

\begin{proof}
Suppose, to the contrary, that $C\not\in \Conv(\del B\cap X)$ and consider the sets $X^{\del}(\epsilon)=\Conv(\del B\cap X)\oplus \B^d(0,\epsilon)$ parameterized by $\epsilon>0$. Then there must exist such an $\epsilon>0$ such that $C\not\in \Conv(X^{\del}(\epsilon))$. 
Because $X^{\del}(\epsilon)$ is convex, the hyperplane separation theorem implies the existence of a hyperplane $H$ separating $C$ from $X^{\del}(\epsilon)$. 
Let $L$ be the line segment between $H$ to $C$ perpendicular to $H$.
From $C$ to $H$ along $L$, the distance to each point in $X^{\del}(\epsilon)$ decreases.

Furthermore, for some $\epsilon'>0$, we have the distance bound $d(C,P)<r(X)-\epsilon'$ for all $P\in X\setminus X^{\del}(\epsilon)$. 
This follows from the fact that the closure of $X\setminus X^{\del}(\epsilon)$ is compact and does not contain any points at distance $r(B)$ from $C$ by definition of $X^{\del}(\epsilon)$.
Hence, the triangle inequality implies that for any point $Q\in \B^d(C,\epsilon')$ and any point $P\in X\setminus X^{\del}(\epsilon)$, we have $d(Q,P)\leq d(Q,C)+d(C,P)<\epsilon'+(r(X)-\epsilon')=r(X)$.

Finally, combining these results, there is a point $C'\neq C$ on $L\cap\B^d(C,\epsilon')$ such that $d(C',P)<r(X)$ for all $P\in X$. 
This contradicts the fact that $B$ is the m.e.b. of $X$, so we conclude that $C\in \Conv(\del B\cap X)$.
\end{proof}

\begin{theorem}\label{the:ConvexConnected}
Let $X\subseteq \R^d$ be closed and convex, then $\G[d][X]$ is connected iff $r(X)=0$, or $r(X)\geq 1$ and $X$ contains at least three affinely independent points. Moreover, if $X$ is bounded, then the diameter of $\G[d][X]$ is finite.
\end{theorem}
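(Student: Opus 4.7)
\emph{Necessity.} The case $r(X)=0$ is trivial, so I assume $\G[d][X]$ is connected with $r(X)>0$. When $X$ is bounded with $r(X)<1$, the centre $C$ of the m.e.b.\ lies in $X$ (closed convex sets contain their m.e.b.\ centre) and has no point of $X$ at Euclidean distance exactly $1$, so $C$ is isolated and $\G[d][X]$ is disconnected. When $X$ has affine dimension at most $1$ but more than one point, $X$ is contained in a line, each vertex has at most two unit-distance neighbours on that line, and the components of $\G[d][X]$ are parametrised by a coordinate modulo $1$, giving uncountably many components. Excluding these leaves exactly the two situations in the statement.

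\emph{Sufficiency, reductions.} I will reduce to the base case where $X$ is bounded with $r(X)=1$ and affine dimension at least $2$. A bounded $X$ with $r(X)>1$ factors as $X=r(X)\cdot(X/r(X))$ with $X/r(X)$ of radius $1$, so Lemma~\ref{lem:Scale} lifts connectedness. For an unbounded $X$ of affine dimension $\geq 2$ and any $p,q\in X$, I replace $X$ with $X'=X\cap\B^d(p,M)$ for $M$ large enough that $X'$ has affine dimension $\geq 2$, contains $q$, and contains two points at distance $\geq 2$ (which forces $r(X')\geq 1$); the bounded case then yields a $pq$-path inside $X'\subseteq X$.

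\emph{Sufficiency, base case.} With $r(X)=1$ and affine dimension at least $2$, Lemma~\ref{lem:EnclosedSimplex} places the centre $C$ of the m.e.b.\ $B$ in $\Conv(\del B\cap X)$, and Carath\'eodory extracts affinely independent $V^1,\ldots,V^k\in\del B\cap X$ with $C\in\Conv(V^1,\ldots,V^k)$. By Lemma~\ref{lem:Seidel}, the simplex $S=\Conv(V^1,\ldots,V^k)\subseteq X$ has m.e.b.\ $B$. When $\dim S\geq 2$, Proposition~\ref{prop:SimplexConnected} makes $\G[d][S]$ connected. When $\dim S=1$, so $S=V^1V^2$ is a diameter of $B$, I pick $R_0\in X$ off the line $S$ (possible because $X$ has affine dimension at least $2$) and set $R=\frac{1}{2}(C+R_0)\in X$; this $R$ lies strictly inside $B$ and off the line, so $T=\Conv(V^1,V^2,R)\subseteq X$ is an obtuse triangle of radius $1$, and $\G[d][T]$ is connected by Lemma~\ref{lem:ObtuseTriangleConnected}. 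In either case I obtain a connected $\G[d][Y]$ for some $Y\subseteq X$ containing $C$ and each segment $CV^i$. Lemma~\ref{lem:AllHalfSpacesCoverEverything} then covers $\R^d$ by the half-spaces $H(C,V^i)$, and Lemma~\ref{lem:radiusToHalfSpace} puts each $x\in H(C,V^i)\cap B$ at unit distance from some point of $CV^i\subseteq Y$; since $X\subseteq B$, every $x\in X$ is one step from the component of $C$.

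\emph{Diameter and main obstacle.} For bounded $X$, $\diam(\G[d][Y])$ is finite by the explicit wiggling estimate of Lemma~\ref{lem:RectangleWiggle} that underlies Lemma~\ref{lem:ObtuseTriangleConnected} and Proposition~\ref{prop:SimplexConnected}; each iteration of Lemma~\ref{lem:Scale} used in the $r(X)>1$ reduction adds only a bounded number of steps, and the final propagation from $Y$ to $X$ adds one, so the total diameter is finite. The main obstacle will be the sub-case $\dim S=1$: Carath\'eodory alone produces only a one-dimensional skeleton, and without augmenting it by an interior point of $B$ one cannot apply the simplex proposition; the obtuse triangle lemma is exactly what fills this gap, and this is where the hypothesis ``affine dimension at least $2$'' is crucially used.
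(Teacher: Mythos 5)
Your proof is correct and follows essentially the same route as the paper: reduce to a bounded set of radius $1$ via Lemma~\ref{lem:Scale}, use Lemma~\ref{lem:EnclosedSimplex} and Carath\'eodory to extract a simplex $S$ with vertices on $\del B\cap X$ whose m.e.b.\ is $B$, treat $\dim S\geq 2$ with Proposition~\ref{prop:SimplexConnected} and $\dim S=1$ with an obtuse triangle and Lemma~\ref{lem:ObtuseTriangleConnected}, and finish by covering $B$ with the half-spaces of Lemmas~\ref{lem:AllHalfSpacesCoverEverything} and~\ref{lem:radiusToHalfSpace}. You fill in more detail than the paper at a few points --- the necessity direction, the unbounded case (intersecting with a large ball instead of the paper's one-line covering sketch), and taking $R=\tfrac12(C+R_0)$ to guarantee strict obtuseness --- but the underlying argument is the same.
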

\begin{proof}
The result is trivial when $X$ contains at most two affinely independent points, so we assume $X$ contains at least three affinely independent points. We first deal with the case where $r(X)$ is unbounded: in that case, $X$ can be covered by a set of compact convex sets with radius at least $1$ which can be reached by taking steps along a ray of $X$. In the remaining case, $r(X)$ is bounded, and we may assume $r(X)=1$ by Lemma~\ref{lem:Scale}.

Let $B$ be the m.e.b.~of $X$ with center $C$, then $C\in \Conv(\del B\cap X)$ because $X$ is closed (Lemma~\ref{lem:EnclosedSimplex}). In particular, there exist at most $d+1$ points $S\subseteq \del B\cap X$ such that $C\in\Conv(S)$ (Caratheodory's theorem). By Lemma~\ref{lem:Seidel}, the m.e.b.~of $S$ is $B$ as well.
As $X$ contains at least three points, $S$ contains at least two points.
If $|S|=2$, then $\Conv(S)$ is a line segment of length two. Furthermore, $X$ contains a third point $P$ not in $\Conv(S)$. Hence, $X$ contains the triangle $T$ formed by $S$ and $P$, which is obtuse of radius one. Hence, $\G[d][T]$ is connected (Lemma~\ref{lem:ObtuseTriangleConnected}), and each pair of points in $\Conv(S)$ is connected by a path in $\G[d][X]$. 
If $|S|>2$, then $\Conv(S)$ is a simplex of dimension at least two. Hence, by Proposition~\ref{prop:SimplexConnected}, $\G[d][\Conv(S)]$ is connected. 

We now know that each point in $\Conv(S)$ can be reached from $C$ in $\G[d][X]$, so in particular, for all $s\in S$, each point on the line segment $Cs$ can be reached from $C$ in $\G[d][X]$. Seeing that $d(s,C)=1$ for all $s\in S$, Lemma~\ref{lem:AllHalfSpacesCoverEverything} implies that $\bigcup_{s\in S}H(C,s)=\R^d$. Hence, for each point $P\in X\subseteq B$, there is an $s\in S$ such that $P\in H(C,s)$. Thus, because $d(C,P)\leq 1$ and $d(s,P)\geq 1$, there is a point $Q$ on $Cs$ such that $d(Q,P)=1$. We noted earlier that all points on $Cs$ can be reached from $C$ in $\G[d][X]$, so $P$ can be reached from $C$ in $\G[d][X]$ as well. As we picked $P\in X$ arbitrarily, each point in $X$ can be reached from $C$ through $\G[d][X]$ and we conclude that $\G[d][X]$ is connected. The finiteness of the diameter of $\G[d][X]$ follows directly from the constructions of the paths in the lemmas.
\end{proof}

By a simple scaling argument, we obtain the following corollary for convex sets. Note that it is incomplete in the sense that connectedness of $\G[d][X]$ for convex sets $X$ is not completely characterized: the case where $r(X)=1$ and $X$ is not closed is still an open problem. We conjecture that the only extra condition for connectedness in that case is that the intersection of $X$ and its m.e.b.~is non-empty. 

\begin{corollary}\label{cor:ConvexNonClosedConnected}
Let $X\subseteq \R^d$ be convex, then $\G[d][X]$ is connected if $r(X)=0$, or $r(X)>1$ and $X$ contains at least three affinely independent points. 
\end{corollary}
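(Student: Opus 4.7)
The plan is to reduce the corollary to Theorem~\ref{the:ConvexConnected} by exhibiting a \emph{closed} convex subset $X' \subseteq X$ whose induced unit-distance graph is already connected, and then connecting every remaining point of $X$ to $X'$ by a single unit-distance step. The case $r(X) = 0$ is trivial (then $X$ is a single point), so throughout I assume $r(X) > 1$ and $X$ contains three affinely independent points.

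First I would handle the bounded case. Pick $C \in \operatorname{relint}(X)$ and translate so that $C = 0$. Since $r(X) > 1$ and $X$ is bounded, I can choose $\mu \in (0,1)$ close enough to $1$ that both
\[
\mu\, r(X) > 1 \qquad \text{and} \qquad (1 - \mu)\sup_{y \in X} |y| \leq 1
\]
hold simultaneously. Set $X' = \mu \bar X$. Then $X'$ is closed and convex, $r(X') = \mu\, r(X) > 1$, $X'$ still contains three affinely independent points, and most importantly $X' \subseteq X$: for any $y \in \bar X$, the point $\mu y$ lies on $[0,y)$, which is contained in $\operatorname{relint}(X) \subseteq X$ by the standard fact that $[p,q) \subseteq \operatorname{relint}(X)$ whenever $p \in \operatorname{relint}(X)$ and $q \in \bar X$. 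Theorem~\ref{the:ConvexConnected} then gives that $\G[d][X']$ is connected.

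Next I would show every $x \in X$ has a neighbour in $X'$, mirroring the argument from the proof of Lemma~\ref{lem:Scale}. The point $P = \mu x$ lies in $X'$ with $d(x,P) = (1-\mu)|x| \leq 1$; since $r(X') > 1$ no ball of radius $1$ encloses $X'$, so some $Q \in X'$ satisfies $d(x,Q) > 1$; the intermediate value theorem applied to $d(x,\cdot)$ along $PQ \subseteq X'$ produces $y \in X'$ with $d(x,y) = 1$. Thus $x$ is adjacent in $\G[d][X]$ to a point of the connected subgraph $\G[d][X']$, and $\G[d][X]$ is connected. The unbounded case reduces to this one: fix $C \in \operatorname{relint}(X)$ and consider $X_R = X \cap \B^d(C,R)$, which is bounded and convex. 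Using a recession direction $v$ of $X$ (which exists for unbounded convex $X$ and whose ray from $C$ stays in $\operatorname{relint}(X) \subseteq X$), the set $X_R$ contains a segment of length $R$ and hence satisfies $r(X_R) \geq R/2$, so for $R$ large the bounded case applies. Any two points of $X$ lie in a common $X_R$, yielding connectedness of $\G[d][X]$.

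The delicate step I expect is the choice of origin. For non-closed $X$ the centre of the m.e.b.\ need not lie in $X$ (think of an open triangle missing one edge), so I cannot translate by that centre; translating instead by a relative-interior point is what makes the inclusion $X' \subseteq X$ go through. The strict inequality $r(X) > 1$ (rather than $r(X) = 1$) is precisely what allows a single $\mu < 1$ to satisfy both numerical constraints above, which is exactly why this scaling approach does not resolve the borderline case $r(X)=1$ flagged as an open problem just before the corollary.
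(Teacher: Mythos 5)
Your proof is correct and is precisely the ``simple scaling argument'' the paper invokes without spelling out: shrink the closure of $X$ into its relative interior while keeping the radius above $1$, apply Theorem~\ref{the:ConvexConnected}, and join every remaining point of $X$ to the shrunken set by a single unit step. The care you take over the centre of scaling (a relative-interior point rather than the m.e.b.\ centre, which need not lie in a non-closed $X$) and over the unbounded case supplies exactly the details the paper omits.
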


\section{Hypercubes and Hyperrectangles} 
Although the connectedness of hyperrectangle-induced unit distance graphs is characterized completely by Theorem~\ref{the:ConvexConnected}, the bound on the graph diameter based on those proofs is quite large. In this section, we will show that the diameter of the unit distance graphs induced by hypercubes is constant in the dimension of the hypercube. For hyperrectangles, we show an upper bound of the diameter that is a linear combination of the dimension and the graph diameter of the `best' two-dimensional rectangle contained in the hyperrectangle.

\begin{proposition}\label{prop:HypercubeDiamBound}
Let $d>1$, then $\G[d][C^d(l)]$ is connected iff $l\geq 2/\sqrt{d}$. Moreover, for $l=2/\sqrt{d}$, the diameter of this graph is at most $8$.
\end{proposition}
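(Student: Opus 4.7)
My plan is to handle the two claims separately. First, the connectedness characterization is an application of Theorem~\ref{the:ConvexConnected}: the hypercube $C^d(l)$ is closed, convex, and has affine dimension $d\geq 2$, so it is connected in $\G[d]$ iff $r(C^d(l))\geq 1$. A direct computation shows the m.e.b.~of $C^d(l)$ is centered at $(l/2,\dots,l/2)$ with radius equal to half the main diagonal $l\sqrt{d}/2$, so the connectedness condition reduces to $l\geq 2/\sqrt{d}$.

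For the diameter bound in the critical case $l=2/\sqrt{d}$, I plan to show that every $P\in C^d(l)$ can be joined to the center $C=(l/2,\dots,l/2)$ by a path of at most $4$ unit steps; concatenating two such paths through $C$ gives the desired bound of~$8$. The geometric facts driving this are that in the critical case the m.e.b.~has radius exactly $1$, all $2^d$ vertices of the cube lie on its boundary sphere, and thus $C$ is a unit distance neighbor of every vertex. Moreover, the half-spaces $H(C,v)$ as $v$ ranges over the cube vertices cover $\R^d$; although Lemmas~\ref{lem:radiusToHalfSpace} and~\ref{lem:AllHalfSpacesCoverEverything} are stated for simplices, their proofs only use that the chosen vertices lie on the m.e.b.~boundary and that their convex hull contains $C$ (via Lemma~\ref{lem:Seidel}), so they apply verbatim to the cube.

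To construct the path $P\to C$, I would first apply Lemma~\ref{lem:radiusToHalfSpace} to obtain a vertex $v$ with $P\in H(C,v)$ and a one-step neighbor $Q\in Cv$ of $P$. It then remains to reach $C$ from an arbitrary point $Q$ on a radius $Cv$ in at most $3$ further steps. My proposed path is of the form $Q\to Y\to v'\to C$, where $v'$ is a vertex to be chosen and $Y\in C^d(l)$ satisfies $|Q-Y|=|Y-v'|=1$; admissible $Y$ lie on the $(d-2)$-sphere $\Sp^{d-1}(Q,1)\cap\Sp^{d-1}(v',1)$ of radius $\sqrt{1-|Q-v'|^2/4}$. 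Varying $v'$ over the $2^d$ cube vertices, together with the high dimension of this intersection for $d\geq 3$, should provide enough slack to keep some such $Y$ inside the cube.

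The main obstacle is proving the existence of a workable pair $(v',Y)$ uniformly in $Q$ and $d$. For $d=2$ the intersection sphere degenerates to a pair of points, and one must verify by direct case analysis over the four cube vertices (potentially combined with the wiggling arguments of Lemmas~\ref{lem:RectangleWiggle} and~\ref{lem:TriangleWiggle} inside a 2D slice containing $Cv$) that at least one vertex $v'$ always yields an admissible $Y$. For $d\geq 3$ the cube sides $2/\sqrt{d}$ shrink with $d$, so even though the intersection sphere is larger, one must exploit the cube's reflection symmetries to reduce $Q$ to a canonical position and identify an always-workable vertex $v'$, for instance $v'$ antipodal to $v$ or differing from $v$ in $\lceil d/2\rceil$ coordinates.
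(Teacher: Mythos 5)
Your connectedness argument and your overall plan (route every point to the center $C$ in at most $4$ steps, concatenate for $8$) match the paper, but the core of the diameter bound is left unproven, and you say so yourself: the existence of a workable pair $(v',Y)$ with $Y\in C^d(l)\cap\Sp^{d-1}(Q,1)\cap\Sp^{d-1}(v',1)$ is exactly the hard step, and it is not routine. That intersection is a $(d-2)$-sphere of radius up to $1$ centered at the midpoint of $Qv'$, while the cube has side length $2/\sqrt{d}\to 0$; there is no a priori reason any point of that sphere lies in the cube, and for $d=2$ it degenerates to two points whose membership in the square would need an explicit case analysis you have not carried out. As written, the proposal reduces the proposition to an unverified geometric existence claim, so there is a genuine gap.

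The paper sidesteps this entirely by routing through the $1$-skeleton of the cube rather than through the radii $Cv$. For a point $P=(P_1,0,\dots,0)$ on an edge, it considers $L_P(h)=(P_1/2,h,\dots,h)$, which is automatically equidistant from the vertex $0$ and from $P$ for every $h$; the common distance is $\le 1/\sqrt{d}<1$ at $h=0$ and $>1$ at $h=2/\sqrt{d}$, so the intermediate value theorem in the single parameter $h$ pins it at exactly $1$ while keeping $L_P(h)$ inside the cube by construction. This gives $M\to 0\to L_P(h)\to P$ in $3$ steps. Then an arbitrary point $P'$ is at distance $1$ from some point of an edge-path joining two antipodal vertices $V,V'$ (since $d(P',V)\le 1\le d(P',V')$ and the path is continuous), giving $4$ steps from $M$ to any point. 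If you want to keep your $Q\to Y\to v'\to C$ scheme, you would need to supply the missing existence argument; the paper's one-parameter equidistant family is the device that makes the analogous step a one-line IVT application instead.
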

\begin{proof}
The connectedness follows directly from Theorem~\ref{the:ConvexConnected} and the fact that $r(C^d(l))\geq 1$ precisely when $l\geq 2/\sqrt{d}$. Hence, we focus on the diameter of hypercubes $C^d(2/\sqrt{d})$. First we show that all points on 1-dimensional faces can be reached from the center point $M=(1/\sqrt{d},\ldots,1/\sqrt{d})$ in at most three steps, then we complete the proof by showing that each other point of the hypercube is at distance one from a point in such a 1-dimensional face. This proves that $\G[d][C^d(2/\sqrt{d})]$ is connected for all $d>1$ with graph diameter at most $8$.

Let $P$ be a point on a 1-dimensional face of the hypercube $C^d(2/\sqrt{d})$. Without loss of generality, assume that $P=(P_1,0,\ldots,0)$,  
then the point $L_P(h)=(P_{1}/2,h,\ldots,h)$ lies within the hypercube and $d(0,L_P(h))=d(P,L_P(h))$ for all $h\in[0,2/\sqrt{d}]$. Moreover, we have the extremal distance bounds $d(0,L_P(0))\leq 1/\sqrt{d}<1$ and $d(0,L_P(2/\sqrt{d}))>\sqrt{(d-1)(2/\sqrt{d})^2}
>1$. By the intermediate value theorem, there is an $h\in[0,2/\sqrt{d}]$ such that $d(0,L_P(h))=d(L_P(h),P)=1$. Therefore, we can reach $P$ from $M$ in at most three steps in $\G[d][C^d(2/\sqrt{d})]$ via the path the points $0$ and $L_P(h)$.

Now, we show that each point in the hypercube is at distance one from a point in a $1$-dimensional face. Let $P'$ be an arbitrary point in the hypercube, and let $VV'$ be any diagonal of the hypercube. Then, without loss of generality, we have $d(P',V)\leq 1$ and $d(P',V')\geq 1$ because each diagonal has length two.
Now consider a path along the 1-dimensional faces of $C^d(2/\sqrt{d})$ between the vertices $V$ and $V'$. By the intermediate value theorem, there is a point $P''$ on this path such that $d(P',P'')=1$. As $P''$ lies on a path through 1-dimensional faces, it lies on a 1-dimensional face, and can thus be reached from $M$ in at most three steps. We conclude that the arbitrary point $P'\in C^d(2/\sqrt{d})$ can be reached from $M$ in at most four steps through $\G[d][C^d(2/\sqrt{d})]$.
\end{proof}

\begin{lemma}\label{lem:RectancleBound}
Let $R^2(l_1,l_2)$ be a $2$-dimensional rectangle with side lengths $l_1\geq l_2$, then $\G[d][R^2(l_1,l_2)]$ is connected iff $\sqrt{l_1^2+l_2^2}\geq 2$. 

Moreover, when $\sqrt{l_1^2+l_2^2}=2$, the diameter of $\G[d][R^2(l_1,l_2)]$ is bounded by $8$ if $l_2\geq1$, and by $4+8 \ceil*{\frac{l_1-1}{2(1-\sqrt{1-l_2^2})}}$ if $l_2<1$.
\end{lemma}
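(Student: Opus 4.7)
The plan is to prove the three claims in the statement separately: the iff for connectedness, the diameter bound of $8$ when $l_2\geq 1$, and the more intricate bound $4+8\ceil*{\cdot}$ when $l_2<1$. The first is immediate from Theorem~\ref{the:ConvexConnected}: the m.e.b.~of a non-degenerate 2D rectangle has the rectangle's diagonal as diameter, so $r(R^2(l_1,l_2))=\sqrt{l_1^2+l_2^2}/2$, and the rectangle contains three affinely independent points.

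For the case $l_2\geq 1$ I would adapt the proof of Proposition~\ref{prop:HypercubeDiamBound} to show that every point of the rectangle is at graph distance at most four from the center $M=(l_1/2,l_2/2)$. Each of the four corners is at unit distance from $M$; for any bottom-edge point $P=(P_1,0)$, the auxiliary point $L=(P_1/2,\sqrt{1-P_1^2/4})$ satisfies $d((0,0),L)=d(L,P)=1$ and has height at most $1\leq l_2$, so the walk $M\to(0,0)\to L\to P$ stays in the rectangle and reaches $P$ in three steps; symmetry handles the other three sides. An intermediate-value argument on the boundary (the minimum distance from any interior point to the boundary is at most $l_2/2<1$, while the maximum distance to some corner is at least $1$) then places every interior point at Euclidean distance exactly one from some boundary point, contributing the fourth step. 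Hence the diameter is at most $8$.

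For the case $l_2<1$ the construction above fails since $L$ can have height exceeding $l_2$, so I would switch to wiggling along the long edges via Lemma~\ref{lem:RectangleWiggle}. Taking $x=l_1-1$ and $h=l_2$ gives, for any two points of $[0,l_1-1]\times\{0\}$, a path of at most $N:=4\ceil*{(l_1-1)/(2(1-\sqrt{1-l_2^2}))}$ steps, and by reflecting the rectangle the same bound holds for $[1,l_1]\times\{0\}$ and for the two analogous subsegments on the top edge. The two wiggling intervals on the bottom are linked by the unit step $(0,0)\to(1,0)$, and the top and bottom edges are linked by a step such as $(0,0)\to(\sqrt{1-l_2^2},l_2)$, which lies in the rectangle since $\sqrt{1-l_2^2}\leq l_1$.

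To lift this to arbitrary $u,v$ in the rectangle, I would first verify that every point $P=(P_1,P_2)$ is at Euclidean distance one from either a top-edge point (when $P_2\leq l_2/2$) or a bottom-edge point (when $P_2\geq l_2/2$): solving $(x-P_1)^2+(l_2-P_2)^2=1$ and using $l_1^2/4=1-l_2^2/4$ forces $\sqrt{1-(l_2-P_2)^2}\leq l_1/2$, so one choice of sign keeps $x$ in $[0,l_1]$. Concatenating the moves \emph{in-step from $u$ to an edge, wiggle, switch half or edge if needed, wiggle, out-step to $v$} then yields a path with at most two wiggles and four non-wiggle unit steps, totalling at most $4+2N$. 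The main obstacle is the bookkeeping when $l_1<2$, where each edge has a gap $(l_1-1,1)$ outside both wiggling intervals: one must verify that the edge feet $u_e,v_e$ and the intermediate crossing steps can always be routed through one of the four covered intervals rather than through the gap.
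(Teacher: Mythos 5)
Your treatment of the connectedness claim and of the case $l_2\geq 1$ is correct and matches the paper, which likewise invokes Theorem~\ref{the:ConvexConnected} and reuses the argument of Proposition~\ref{prop:HypercubeDiamBound}. The problem is in the case $l_2<1$, and it is exactly the issue you flag at the end: when $\sqrt{l_1^2+l_2^2}=2$ and $l_2>0$ we have $l_1<2$, so the two wiggle intervals $[0,l_1-1]$ and $[1,l_1]$ on each long edge leave an uncovered gap $(l_1-1,1)$ of length $2-l_1$. Your IVT argument only places the feet $u_e,v_e$ somewhere in $[0,l_1]$ on a long edge, not in a covered sub-interval, and a point's two feet on a given long edge are $2\sqrt{1-P_2^2}$ apart, which for $l_2$ close to $1$ can be smaller than the gap length; so one foot can land in the gap while the other falls outside the rectangle. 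Ruling out that \emph{all} available feet (on both long edges) fail simultaneously is a nontrivial case analysis that you have not carried out, and it is not clear it can be completed without a new idea. As written, the proof is therefore incomplete at the step that actually produces the claimed bound.

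The paper avoids this entirely with a different device: it fixes $M=(l_1/2,l_2/2)$, takes an arbitrary $P$ with $P_1\geq l_1/2$ and $P_2\geq l_2/2$, and applies the intermediate value theorem to $d(P,\cdot)$ along a \emph{single} continuous path from $(0,0)$ to $M$ (using $d(P,(0,0))\geq 1\geq d(P,M)$). That path is the segment $[0,l_1-1]\times\{0\}$ concatenated with the arc $R^2(l)\cap\Sp^1((l_1,0),1)$, joined at $(l_1-1,0)$; every point of the segment is within $1+4\ceil*{\frac{l_1-1}{2(1-\sqrt{1-l_2^2})}}$ steps of $M$ (one step $M\to(0,0)$ plus Lemma~\ref{lem:RectangleWiggle}), and every point of the arc is within $2$ steps of $M$ (via the corner $(l_1,0)$, which is at distance $1$ from $M$). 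The arc plays precisely the role of your missing gap-covering argument: it reaches the region that the second wiggle interval would have to serve, but with a uniform two-step bound. If you replace your ``four covered intervals linked by switching steps'' scheme with this segment-plus-arc path and route every point through $M$, your step count $4+8\ceil*{\frac{l_1-1}{2(1-\sqrt{1-l_2^2})}}$ goes through without the unresolved bookkeeping.
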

\begin{proof}
The connectedness result follows directly from Theorem~\ref{the:ConvexConnected}, so we focus on the diameter bound. If $l_2\geq 1$, then we can apply exactly the same arguments as in Proposition~\ref{prop:HypercubeDiamBound}, so we continue with the case $l_2<1$. To prove the bound, we show that for any point $P\in R^2(l_1,l_2)$, there is a path from $M=(l_1/2,l_2/2)$ to $P$ in $\G[2][R^2(l_1,l_2)]$ of at most $2+4 \ceil*{\frac{l_1-1}{2(1-\sqrt{1-l_2^2})}}$ steps.

To see this, assume without loss of generality that $P_1\geq l_1/2$ and $P_2\geq l_2/2$. Note that $P$ must then be at distance $1$ from some point on any continuous path from $0$ to $M$ by the intermediate value theorem. In particular, $P$ is at distance 1 from some point on the continuous path between $0$ and $M$ consisting of the line segment $L=[0,l_1-1]\times\{0\}$ (containing the point $0$) and the arc $R^2(l)\cap \Sp^1((l_1,0),1)$ (containing the point $M$), joined in the point $(l_1-1,0)$. Each point on $L$ can be reached in at most $1+4 \ceil*{\frac{l_1-1}{2(1-\sqrt{1-l_2^2})}}$ steps from $M$ (Lemma~\ref{lem:RectangleWiggle}), and each point on the arc in $2$ steps. Hence, $P$ can be reached from $M$ in at most $2+4 \ceil*{\frac{l_1-1}{2(1-\sqrt{1-l_2^2})}}$ steps.
\end{proof}

\begin{proposition}\label{prop:HyperrectangleDiamBound}
Let $R^d(l)$ be a $d$-dimensional rectangle with side lengths $l=(l_1,\ldots,l_d)$, then $\G[d][R^d(l)]$ is connected iff $|l|\geq 2$. 

Moreover, if $|l|= 2$ and $l'_1= \sqrt{\sum_{i\in I}l_i^2}$ and $l'_2=\sqrt{\sum_{i\not\in I}l_i^2}$ for some $\emptyset\neq I\subsetneq [d]$ then the diameter is bounded by $\diam(\G[d][R^d(l)])\leq \diam(\G[2][R^2(l'_1,l'_2)])+2$.
\end{proposition}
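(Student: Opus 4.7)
The plan is to handle the two assertions separately. For the connectedness equivalence, I would apply Theorem~\ref{the:ConvexConnected}: the minimum enclosing ball of $R^d(l)$ has radius $|l|/2$, so $r(R^d(l))\geq 1$ iff $|l|\geq 2$; since $R^d(l)$ is closed and convex and has affine dimension at least $2$ whenever at least two of the $l_i$ are positive (which is guaranteed by the hypothesis that the partition $I$ exists with both parts contributing to $|l|=2$), the theorem yields the stated equivalence.

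For the diameter bound, the strategy is to embed an isometric copy $R$ of $R^2(l'_1,l'_2)$ into $R^d(l)$ and show that every point of $R^d(l)$ lies at Euclidean distance exactly one from some point of $R$. Concretely, set $v_1=\sum_{i\in I}l_i e_i$ and $v_2=\sum_{i\notin I}l_i e_i$; these are orthogonal vectors with $|v_1|=l'_1$ and $|v_2|=l'_2$, and $R:=\{\alpha v_1+\beta v_2:\alpha,\beta\in[0,1]\}$ is isometric to $R^2(l'_1,l'_2)$ and lies inside $R^d(l)$, since the $i$th coordinate of a point of $R$ is $\alpha l_i\in[0,l_i]$ for $i\in I$ and $\beta l_i\in[0,l_i]$ otherwise. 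Moreover, $R$ and $R^d(l)$ share the same m.e.b.~center $M=(l_1/2,\ldots,l_d/2)$ and radius $1$, since $|l'|=|l|=2$.

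The key claim is that every $P\in R^d(l)$ is at distance exactly $1$ from some point of $R$. Let $\pi$ be the orthogonal projection onto $\mathrm{span}(v_1,v_2)$; the coefficients of $\pi(P)$ in the $v_1,v_2$ basis are $\sum_{i\in I}P_i l_i/\sum_{i\in I}l_i^2$ and its analogue for $[d]\setminus I$, both of which lie in $[0,1]$ because $0\leq P_i\leq l_i$. Hence $\pi(P)\in R$. Writing $h=d(P,\pi(P))$, we have $h\leq d(P,M)\leq 1$ since $M\in\mathrm{span}(v_1,v_2)$. By the Pythagorean theorem, $d(P,Q)=1$ for $Q\in R$ iff $d(\pi(P),Q)=\sqrt{1-h^2}$. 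The minimax characterization of the m.e.b.~gives $\max_{Q\in R}d(\pi(P),Q)\geq 1\geq\sqrt{1-h^2}$, because $M$ uniquely minimizes the max-distance function on $R$ with minimum value $1$. Applying the intermediate value theorem along the segment in $R$ from $\pi(P)$ to a corner of $R$ at distance at least $1$ produces the desired $Q$.

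With this key claim in hand, the bound follows immediately: given $P_1,P_2\in R^d(l)$, take one unit step from $P_1$ to some $Q_1\in R$, follow a shortest path through $R\cong R^2(l'_1,l'_2)$ from $Q_1$ to some $Q_2\in R$ at distance one from $P_2$ (using at most $\diam(\G[2][R^2(l'_1,l'_2)])$ steps, since the embedding is isometric and unit-distance-preserving), and finish with one step from $Q_2$ to $P_2$. The main obstacle is the key claim; once the containment $\pi(P)\in R$ and the uniform lower bound $\max_{Q\in R}d(\pi(P),Q)\geq 1$ are established via the projection/minimax argument, the rest is routine path-gluing through the isometric embedding.
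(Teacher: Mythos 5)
Your proof is correct and follows the same overall architecture as the paper's: connectedness via Theorem~\ref{the:ConvexConnected}, then an isometrically embedded copy $R$ of $R^2(l'_1,l'_2)$ inside $R^d(l)$, a shortest path through that copy, and one extra unit step at each end, giving $\diam(\G[2][R^2(l'_1,l'_2)])+2$. The genuine difference is in how you certify that every point of $R^d(l)$ is one unit step away from the two-dimensional slice. The paper restricts attention to the diagonal $D$ of $R^d(l)$, which lies inside $R$, and invokes Lemmas~\ref{lem:AllHalfSpacesCoverEverything} and~\ref{lem:radiusToHalfSpace}: the endpoints of $D$ are antipodal on the m.e.b., so the two associated half-spaces cover $\R^d$, and each point of the hyperrectangle is then at distance one from some point of $D$. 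You instead orthogonally project $P$ onto the plane of $R$, verify that the projection lands in $R$ (the coefficient computation with $0\leq P_i\leq l_i$ is correct) at distance $h\leq d(P,M)\leq 1$ from $P$, and combine the Pythagorean theorem with the intermediate value theorem inside $R$ to hit distance exactly one. Both arguments are sound and yield the same bound; yours is more self-contained in that it does not lean on the simplex half-space lemmas, while the paper's reuses machinery it has already built and avoids the explicit projection. One small caveat shared by both proofs: the ``iff $|l|\geq 2$'' claim implicitly requires at least two of the $l_i$ to be positive so that $R^d(l)$ has affine dimension at least two (otherwise the set is a segment and the graph is disconnected even when $|l|=2$); your parenthetical remark points in the right direction, but note that the existence of the partition $I$ is only hypothesized for the diameter claim, not for the connectedness claim.
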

\begin{proof}
The connectedness result follows directly from Theorem~\ref{the:ConvexConnected}, so we again focus on the diameter bound. 

Note that for each $\emptyset\neq I\subsetneq [d]$, $R^d(l)$ contains a rectangle with side lengths $l'_1=\sqrt{\sum_{i\in I}l_i^2}$ and $l'_2=\sqrt{\sum_{i\not\in I}l_i^2}$. 
Such a rectangle necessarily contains a diagonal $D$ of $R^d(l)$. Hence, $\G[d][R^2(l'_1,l'_2)]$ is connected, which implies all points on $D$ are connected in $\G[d][R^d(l)]$ by a path of length at most $\diam(\G[d][R^2(l'_1,l'_2)])$. Finally, Lemmas~\ref{lem:AllHalfSpacesCoverEverything} and~\ref{lem:radiusToHalfSpace} imply that each point in $\G[d][R^d(l)]$ is at distance 1 from some point on $D$. Therefore, $\G[d][R^d(l)]$ has diameter at most $\diam(\G[2][R^2(l'_1,l'_2)])+2$.
\end{proof}

For hypercubes, this proposition together with Lemma~\ref{lem:RectancleBound} gives a proof for a diameter bound of $\diam(C^d(2/\sqrt{d}))\leq 10$ independent of Proposition~\ref{prop:HypercubeDiamBound}. This shows that the bound for hyperrectangles in Proposition~\ref{prop:HyperrectangleDiamBound} can possibly still be improved.

\section{Discussion}
In this paper, we have characterized the connectedness of unit distance graphs induced by closed and convex subsets of $\R^d$. For compact convex sets, we have shown that the diameter of this graph is bounded. Moreover, for the smallest connected hypercubes in each dimension, this diameter stays constant with an increasing dimension. Connected components of such graphs (i.e., for strips of the plane) have been considered superficially previously \cite{axenovich2014chromatic,bauslaugh1998tearing}, but never as the main subject of study, as we have done.

There are some obvious problems that we have left open, such as the connectedness of the unit distance graphs induced by non-closed convex sets, or even non-convex sets. The former is partially answered by Corollary~\ref{cor:ConvexNonClosedConnected}, but the edge case of radius one is still open. This question can possibly be answered using techniques similar to the ones we used, but we made ample use of paths through faces of simplices, which may not be present in the non-closed case. The latter question (finding a characterization of the connectedness for non-convex sets) is probably quite hard to answer in general, so it merits restriction to more tractable sets, such as polytopes.

It would be interesting to see whether this translates to computational hardness of such questions as well. For example, we could consider the complexity of the following problem.\\
\\
Input: A (finite) polytope $P\subseteq \R^d$. \\
Question: Is $\G[d][P]$ connected?\\

Interestingly, it is not even immediately clear whether this is in NP; we probably need some kind of characterization of connectedness to verify a certificate. 

Another interesting (computational) problem concerns the diameter: Given a set $X\subset \R^d$ of radius $1$, find a scaling factor $\lambda>0$ such that $\diam(\G[d][\lambda X])$ is minimal. If $\lambda$ is small, the diameter can be very large because of little wiggle room; if $\lambda$ is large, the diameter can be large because of the distance that needs to be bridged using steps of distance one.

Another variation for all these problems is to consider the unit distance graph of $\Q^d$, which is common when studying the chromatic number as well (e.g., \cite{axenovich2014chromatic}). However, only considering connectedness and diameters (next to chromatic number) is quite restrictive, as there are numerous other interesting graph properties. For example, one can study the existence of odd-length cycles. This has been done for strips of the plane in \cite{bauslaugh1998tearing}, where they attempt to find the minimal width $w_n$ of a strip of the plane $S_{w_n}$ so that the graph $\G[2][S_{w_n}]$ contains a cycle of length $n$ (where $n$ is odd).

\subsection{Connected components for small subsets of $\R^d$}
We have focused on induced unit distance graphs that are connected, i.e., that have one connected component. Obviously, we can relax this notion, and instead consider the number and `size' of connected components when $\G[d][X]$ is disconnected.

For example, if the radius of a hyperrectangle $R$ is smaller than $1$, then the midpoint of the hyperrectangle has no neighbours in $\G[d][R]$, but this graph is not necessarily totally disconnected. Of course, this does happen when the radius is smaller than than $1/2$. It is not immediately clear what the components look like for radii between $1/2$ and $1$.

\begin{figure}[h]
    \centering
    \includegraphics[width=\textwidth]{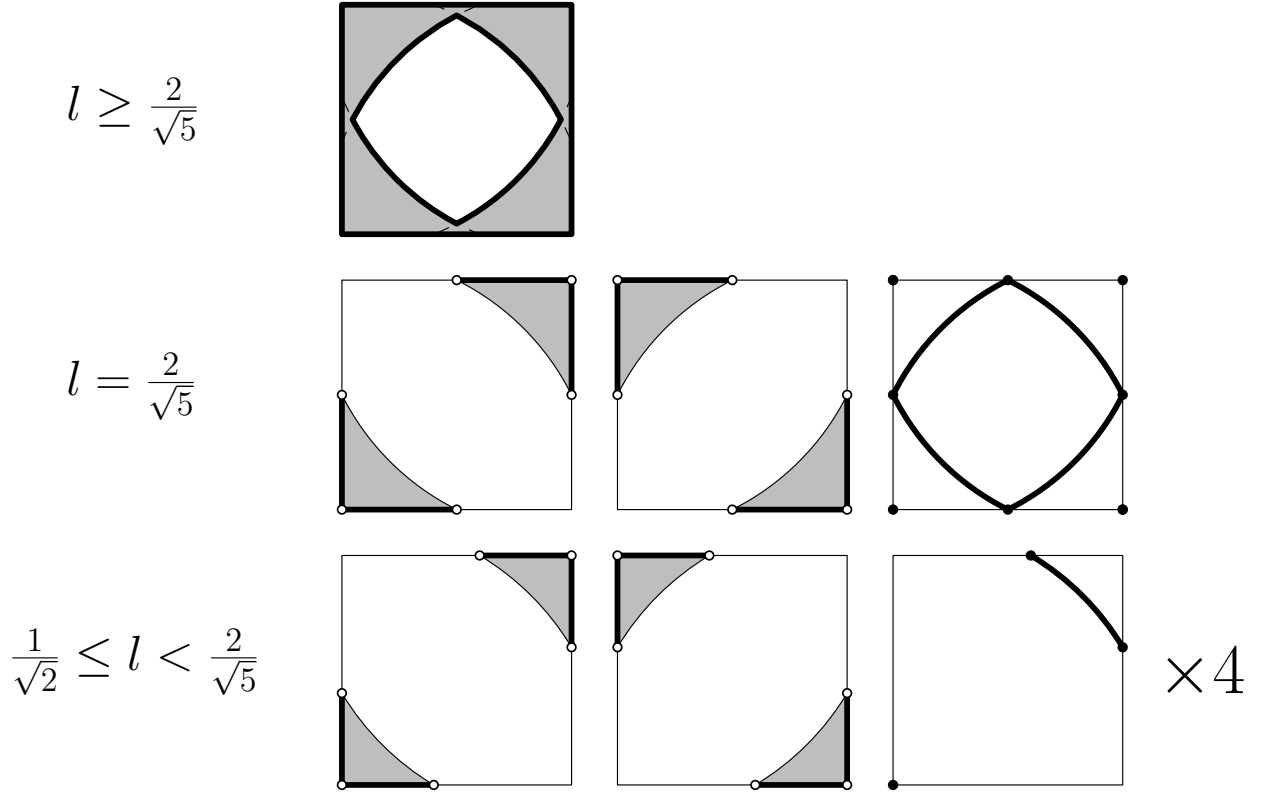}
    \caption{The non-trivial connected components of $\G[2][C^2(l)]$ for varying $l$. In each square, the component consists of the shaded area, the thick black lines, and the filled nodes. Each of the remaining points in the middle of the square, which are in none of the displayed components, is a connected component of its own.}
    \label{fig:componentsSmallSquare}
\end{figure}

A quick illustration of this concept for squares makes it clear that these components can be quite interesting. If the square $C^2(l)$ has side lengths $l> \frac{2}{\sqrt{5}}$, then it seems that $\G[d][C^2(l)]$ has one non-trivial connected component consisting of the points
\[\bigcup_{v\in V}C^2(l)\setminus \mathring{\B}^2(v,1),\]
where $V$ is the set of vertices of $C^2(l)$, and $\mathring{B}^2(v,1)$ is the open ball of radius $1$ around $v$; all points on the interior of the shape bounded by these circle arcs have no neighbours (Figure~\ref{fig:componentsSmallSquare}). If $l=\frac{2}{\sqrt{5}}$, then $\G[d][C^2(l)]$ appears to have 3 non-trivial connected components:
\[
\begin{aligned}
&\left(C^2(l)\setminus \B^2((0,0),1)\cup C^2(l)\setminus \B^2((l,l),1)\right)\setminus\{(0,0),(l,l)\},\\
&\left(C^2(l)\setminus \B^2((0,l),1)\cup C^2(l)\setminus \B^2((l,0),1)\right)\setminus\{(0,l),(l,0)\},\\
&\{(0,0),(0,l),(l,0),(l,l)\}\cup\bigcup_{v,\in V}C^2(l)\cap \Sp^1(v,1),
\end{aligned}
\]
where $\Sp^1(v,1)$ is the circle of radius $1$ around $v$ (Figure~\ref{fig:componentsSmallSquare}). For $1/\sqrt{2}\leq l<\frac{2}{\sqrt{5}}$, the last of these components falls apart into 4 separate components consisting of a vertex $v$ and the arc $C^2(l)\cap \Sp^1(v,1)$.

There is a striking (partial) correspondence between the critical side lengths $l=1/\sqrt{2}$, $l=2/\sqrt{5}$ and $l=\sqrt{2}$ of the square for which the number of non-trivial connected components of $\G[2][C^2(l)]$ changes, and the lengths at which the chromatic number changes \cite{kruskal2008chromatic}---only the critical value $l=8/\sqrt{65}$ for the chromatic number does not show up here. It would be interesting to study the case $l=8/\sqrt{65}$ in more detail, and to investigate whether such a correspondence also holds for hypercubes.

\subsection{Random walks}
Our problem is obviously also related to random walk problems. In most of these problems, the step length has some continuous distribution (often exponential). Here, the step length is $1$ in all cases, and only the new direction is chosen (e.g., uniformly) at random. This leads to the question of determining the distribution for the location after a fixed number of steps. Probabilistic problems of this nature have been investigated thoroughly ever since Pearson asked the following question \cite{pearson1905problem}.
\begin{quote}
A man starts from a point $O$ and walks $l$ yards in a straight line; he then turns through any angle whatever and walks another $l$ yards in a second straight line. He repeats this process $n$ times. I require the probability that after these $n$ stretches he is at a distance between $r$ and $r + \delta r$ from his starting point, $O$.
\end{quote}

However, in related research, the space of the so called \emph{short uniform random walk} is most often unbounded, i.e., $\R^d$ (e.g., \cite{borwein2011some,borwein2016closed,zhou2019borwein}). There is at least some interest in bounded cases---as we have studied in a non probabilistic setting---as well, but then, too the step size may vary, and the boundary may be assumed to be reflexive (e.g., \cite{conolly1987random}).

Although we do not venture into the probabilistic realm in this paper, there are some obvious questions that this paper brings up. For example, we could ask essentially the same question as Pearson for bounded regions, but for a man who is aware of his surroundings and walks stretches of length $1$ without crossing the boundary of the region. 
Note that the starting point is of great influence in this problem, and a uniform distribution on the neighbourhood may either be a continuous one (if there is an infinite number of neighbours), or discrete (if the neighbourhood is finite, such as for the midpoint of $C^2(\sqrt{2})$).

Some examples of such distributions after a small number of steps in squares of varying size are shown in Figure~\ref{fig:Simulations}. These are estimated using $20000$ independent Monte-Carlo simulations of the process.

\begin{figure}[h]
    \centering
    \includegraphics[width=\textwidth]{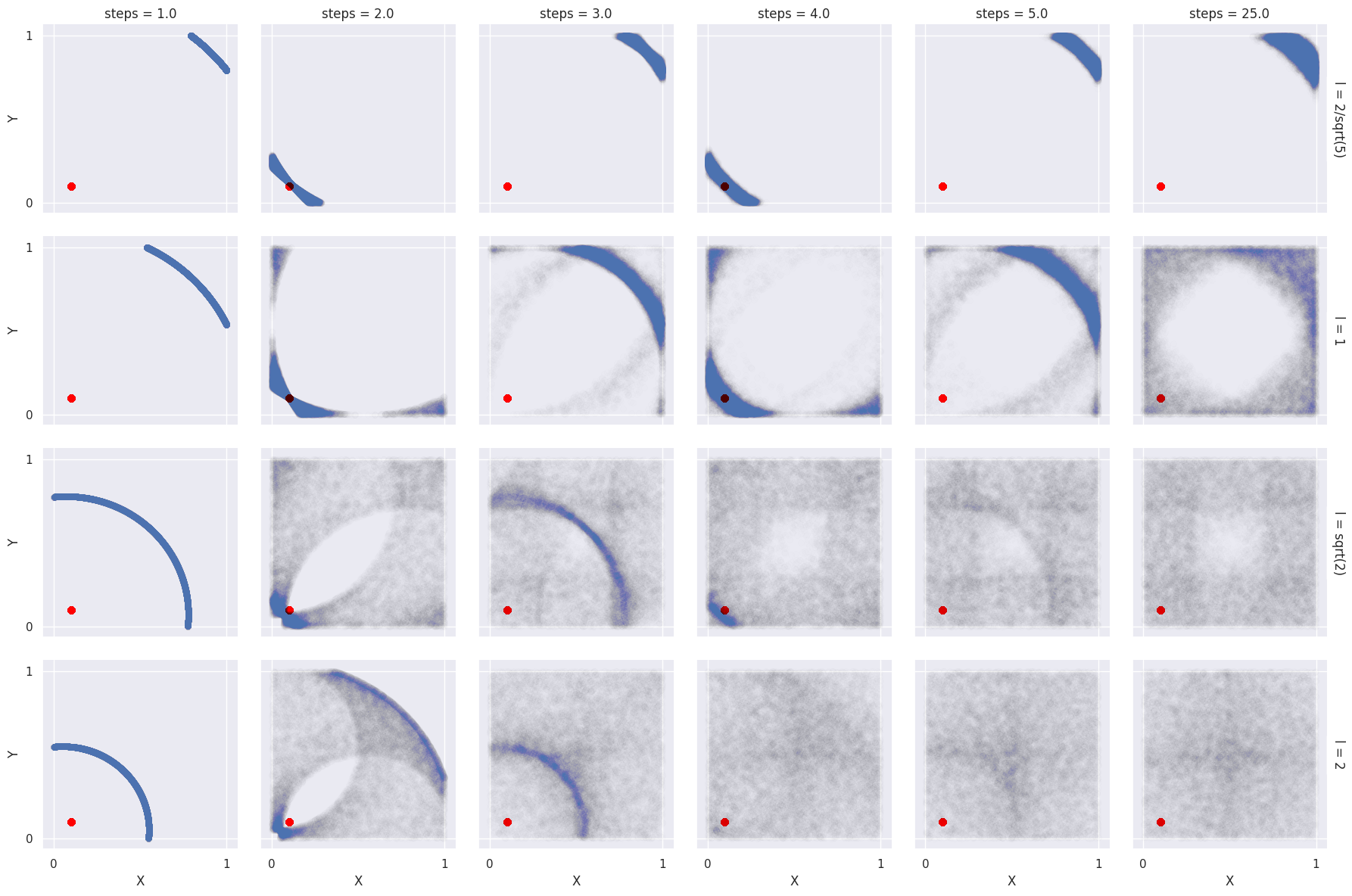}
    \caption{The distribution of the location after a small number of steps (columns, $s=1,2,3,4,5,25$) in different size squares $C^2(l)$ (rows, $l=2/\sqrt{5},1,\sqrt(2),2$). Each distribution is estimated using 20000 independent Monte-Carlo simulations of the process, starting in the point $(0.1,0.1)$ (red). The squares are all scaled to size one, so that the `unit' steps are of different sizes.}
    \label{fig:Simulations}
\end{figure}

As for Pearson's question, we can also inquire about the distribution after an infinite number of steps. i.e., the stationary distribution of the random walk. This distribution may depend on the starting point when the $\G[d][X]$ is disconnected, or is only connected via a node of finite degree.


\begin{thebibliography}{1}

\bibitem[ACL{$^+$}14]{axenovich2014chromatic}
Maria Axenovich, J~Choi, M~Lastrina, T~McKay, J~Smith, and B~Stanton.
\newblock On the chromatic number of subsets of the euclidean plane.
\newblock {\em Graphs and Combinatorics}, 30(1):71--81, 2014.

\bibitem[Bau98]{bauslaugh1998tearing}
Bruce~L Bauslaugh.
\newblock Tearing a strip off the plane.
\newblock {\em Journal of Graph Theory}, 29(1):17--33, 1998.

\bibitem[BNSW11]{borwein2011some}
Jonathan~M Borwein, Dirk Nuyens, Armin Straub, and James Wan.
\newblock Some arithmetic properties of short random walk integrals.
\newblock {\em The Ramanujan Journal}, 26(1):109, 2011.

\bibitem[BS16]{borwein2016closed}
Jonathan~M Borwein and Corwin~W Sinnamon.
\newblock A closed form for the density functions of random walks in odd
  dimensions.
\newblock {\em Bulletin of the Australian Mathematical Society},
  93(2):330--339, 2016.

\bibitem[CR87]{conolly1987random}
Brian Conolly and David Roberts.
\newblock Random walk models for search with particular reference to a bounded
  region.
\newblock {\em European journal of operational research}, 28(3):308--320, 1987.

\bibitem[FGK03]{fischer2003fast}
Kaspar Fischer, Bernd G{\"a}rtner, and Martin Kutz.
\newblock Fast smallest-enclosing-ball computation in high dimensions.
\newblock In {\em European Symposium on Algorithms}, pages 630--641. Springer,
  2003.

\bibitem[KBVC18]{kanel2018chromatic}
A~Kanel-Belov, V~Voronov, and D~Cherkashin.
\newblock On the chromatic number of an infinitesimal plane layer.
\newblock {\em St. Petersburg Mathematical Journal}, 29(5):761--775, 2018.

\bibitem[Kru08]{kruskal2008chromatic}
Clyde~P Kruskal.
\newblock The chromatic number of the plane: the bounded case.
\newblock {\em Journal of Computer and System Sciences}, 74(4):598--627, 2008.

\bibitem[Pea05]{pearson1905problem}
Karl Pearson.
\newblock The problem of the random walk.
\newblock {\em Nature}, 72(1865):294--294, 1905.

\bibitem[VHG{$^{+}$}13]{vanderzee2013geometric}
Evan VanderZee, Anil~N Hirani, Damrong Guoy, Vadim Zharnitsky, and Edgar~A
  Ramos.
\newblock Geometric and combinatorial properties of well-centered
  triangulations in three and higher dimensions.
\newblock {\em Computational Geometry}, 46(6):700--724, 2013.

\bibitem[VHGR10]{vanderzee2010well}
Evan Vanderzee, Anil~N Hirani, Damrong Guoy, and Edgar~A Ramos.
\newblock Well-centered triangulation.
\newblock {\em SIAM Journal on Scientific Computing}, 31(6):4497--4523, 2010.

\bibitem[Zho19]{zhou2019borwein}
Yajun Zhou.
\newblock On borwein’s conjectures for planar uniform random walks.
\newblock {\em Journal of the Australian Mathematical Society},
  107(3):392--411, 2019.


\end{thebibliography}
\end{document}